\date{December 2, 2015}
\let\oldsection\section
\renewcommand\section{\setcounter{equation}{0}\oldsection}
\newtheorem{theorem}{Theorem}[section]
\newtheorem{lemma}{Lemma}[section]
\newtheorem{proposition}{Proposition}[section]
\newcommand{\n}{\nabla}
\begin{document}

\title[Local well-posedness of liquid crystal system]{Local well-posedness of strong solutions to density-dependent
liquid crystal system}

\author{Huajun~Gong}
\address[Huajun Gong]{College of Information Engineering, Shenzhen University, Shenzhen 518060, PR China;}
\address{College of Mathematics and Statistics, Shenzhen University, Shenzhen 518060, PR China.}
\email{huajun84@hotmail.com}

\author{Jinkai~Li}
\address[Jinkai~Li]{Department of Computer Science and Applied Mathematics, Weizmann Institute of Science, Rehovot 76100, Israel.}
\email{jklimath@gmail.com}

\author{Chen~Xu}
\address[Chen~Xu]{Institute of Intelligent Computing, Shenzhen University, Shenzhen 518060, PR China.}
%\address{College of Mathematics and Statistics, Shenzhen University, Shenzhen 518060, PR China.}
\email{xuchen@szu.edu.cn}

\keywords{density-dependent liquid crystal system;  existence and uniqueness; strong solutions; Ericksen-Leslie system.}
\subjclass[2010]{35D35, 35Q35, 76A15, 76D03.}

%%% ----------------------------------------------------------------------

\begin{abstract}
In this paper, we study the Cauchy problem to the density-dependent
liquid crystal system in $\mathbb R^3$. We establish the local
existence and uniqueness of strong solutions to this system.
In order to overcome the difficulties caused by the high order coupling terms,
a biharmonic regularization of the system, as an auxiliary system, is introduced, and we
make full use of the intrinsic cancellation
properties between the high order coupling terms.
\end{abstract}

%%% ----------------------------------------------------------------------
\maketitle

%\tableofcontents
\allowdisplaybreaks

\section{Introduction}

In this paper, we study the following density-dependent liquid crystal system on $\mathbb R^3$:
\begin{equation}\label{el}\left\{
\begin{array}{l}
\partial_t\rho+u\cdot\nabla\rho=0,\\
\rho(\partial_tu+(u\cdot\n) u)+\n P=\Delta u -\n\cdot[\n d\odot \n d+(\Delta d+|\n d|^2d)\otimes d],\\
\text{div}\,u=0,\\
\partial_td+(u\cdot\n) d-(d\cdot\n) u=(\Delta d+|\n d|^2d)-(d^T A d)d,\\
|d|=1,
\end{array}
\right.
\end{equation}
where $\rho\in[0,\infty)$ is the density, $u=(u^1, u^2, u^3)$ is the velocity field, $P\in\mathbb R$ is the pressure, and $d=(d^1, d^2, d^3)\in S^2$, the unit sphere in $\mathbb R^3$, is the director field, $A=\frac12(\nabla u+(\nabla u)^T)$ and $\nabla d\odot\nabla d=(\partial_id\cdot\partial_jd)_{3\times3}$.

For the homogeneous case, i.e.\,the case that the density is a
constant, the
mathematical studies on the dynamical liquid crystal systems were
started by Lin--Liu \cite{LINLIU95,LINLIU00}, where they established
the global existence
of weak solutions, in both 2D and 3D, to the Ginzburg-Landau
approximation
of the liquid crystal system, see Cavaterra--Rocca--Wu \cite{CRW13}
and Sun--Liu \cite{SL09} for some generalizations to the general liquid
crystal systems, but still with Ginzburg-Landau approximation. Global
existence of weak solutions to the original liquid crystal systems in
2D, without the Ginzburg-Landau approximation, was established by
Lin--Lin--Wang \cite{LINLINWANG10}, Hong \cite{HONG11},
Hong--Xin \cite{HONGXIN12}, Huang--Lin--Wang \cite{HUANGLINWANG14}
and Wang--Wang \cite{WANGWANG14}, and in particular, it was shown
that global weak solutions to liquid crystal system in 2D have at most finite many singular times, while the uniqueness of weak solutions to liquid crystal
system
in 2D was proved by Lin--Wang \cite{LINWANG10}, Li--Titi--Xin \cite{LITITIXIN}, Wang--Wang--Zhang \cite{WANGWANGZHANG} and Xu--Zhang \cite{XUZHANG12}; global
existence (but without uniqueness) of weak solutions to the
liquid crystal system in 3D was recently
established by Lin--Wang \cite{LINWANG15}, under the assumption that
the initial director filed $d_0$ takes value from the upper half unite
sphere. If the initial data are suitably smooth, then the liquid crystal
system has a unique local strong solution, see Hong--Li--Xin \cite{HONGLIXIN14}, Li--Xin \cite{LIXIN1}, Wang--Wang \cite{WANGWANG14}, Wang--Zhang--Zhang \cite{WANGZHANGZHANG13}, Wu--Xu--Liu \cite{WUXULIU13} and Hieber--Nesensohn--Pr\"us--Schade \cite{HIEBER1}. Moreover, if the initial data is suitably small,
or the initial director filed satisfies some geometrical condition in
2D, then the local strong solution to the liquid crystal system can be extended to be a global one, see Hu--Wang \cite{HUWANG10}, Lei--Li--Zhang \cite{LEILIZHANG14}, Li--Wang \cite{LIWANG12}, Ma--Gong--Li \cite{MAGONGLI}, Gong--Huang--Liu--Liu \cite{GONGHUANGLIULIU15}
and Hieber--Nesensohn--Pr\"us--Schade \cite{HIEBER1}; remarkably, the recent work Huang--Lin--Liu--Wang \cite{HUANGLINLIUWANG}, concerning the finite time blow up of liquid crystal system in 3D, indicates that one can not generally expect the global existence of strong solutions to the liquid crystal system in 3D, without any further assumptions, beyond the necessary regularities, on the initial data. It is worth
to mention that some mathematical analysis concerning the global existence of weak solutions and local or global well-posedness of strong solutions of the non-isothermal liquid crystal systems were addressed by Hieber--Pr\"us \cite{HIEBER2}, Li--Xin \cite{LIXIN2}, Feireisl--Rocca--Schimperna \cite{FRS11} and Feireisl--Fr\'emond--Rocca--Schimperna \cite{FFRS12}.

In contrast with the homogeneous case, there are much less works concerning the mathematical analysis on the inhomogeneous liquid crystal system. As the counterparts of \cite{LINLIU95}, global existence of weak solutions
to the Ginzburg-Landau approximated density-dependent liquid crystal system was established by Liu--Zhang \cite{LIUZHANG09}, Xu--Tan \cite{XUTAN11} and Jiang--Tan \cite{JIANGTAN09}, while for the original liquid crystal system, i.e.\,the system without the Ginzburg-Landau approximation,
global existence of weak solutions was established only in 2D and under some geometrical assumptions on the initial director field, see
Li \cite{JKL14}, where the global existence of strong solutions was also established at the same time. Local well-posedness
of strong solutions to the three dimensional liquid crystal system was established by Wen--Ding \cite{WEN11}, while the corresponding
global well-posedness, under some smallness assumption on the initial data, was proved by Li--Wang \cite{LIWANG15} in the absence of vacuum, and by Li \cite{JKL15} and Ding--Huang--Xia \cite{DINGHUANGXIA} in the presence of vacuum.

Noticing that in all the existing works concerning the density-dependent liquid crystal systems, see, e.g., \cite{LIUZHANG09,XUTAN11,JIANGTAN09,JKL14,WEN11,LIWANG15,JKL15,DINGHUANGXIA} mentioned in the above paragraph, the systems under consideration are simplified versions of the general liquid crystal system. In the general liquid crystal system, there presents some
high order coupling terms which are as high as the leading terms, while these coupling terms were ignored in the works just mentioned.
Technically, the main difficulty resulted from the high order coupling terms is: on the one hand,
in order to get the energy estimates, one has to make full use of the intrinsic cancellation properties between these high order coupling terms, but on the other hand, if we solve the system by using the standard arguments such as the Galerkin approximation or linearization, it will destroy these intrinsic cancellation properties.

The aim of this paper is to study the density-dependent liquid crystal system, and we focus on such a system that has the high order
coupling terms. Precisely, we consider system (\ref{el}), and study the Cauchy problem of it. It should be noted that the arguments present in this paper apply equally to the general system but the calculations will be more involved. We complement system (\ref{el}) with the following boundary and initial conditions:
\begin{eqnarray}
  &&(u, d)\rightarrow(0,d_*),\quad\mbox{as }x\rightarrow\infty,\label{bc}\\
  &&(\rho, u, d)|_{t=0}=(\rho_0, u_0, d_0),\label{ic}
\end{eqnarray}
where $d_*$ is a constant unit vector.

Before stating the main result, let us introduce some notations which will be used throughout this paper. For $1\leq q\leq\infty$, we denote by $L^q(\mathbb R^3)$ the standard Lebesgue space. For positive integer $k$, $H^k(\mathbb R^3)$ is the standard Sobolev space $W^{k,2}(\mathbb R^3)$, while $D^{k,2}(\mathbb R^3)$ denotes the corresponding homogeneous Sobolev space
$$
D^{k,2}=\{\varphi\in L^6(\mathbb R^3)|\nabla^j\varphi\in L^2(\mathbb R^3), 1\leq i\leq k\}.
$$
For simplicity, we use the same notation $X$ for a Banach space $X$ and its $N$-product space $X^N$.

The main result of this paper is the following:

\begin{theorem}\label{th1}
Assume that the initial data $(\rho_0, u_0,d_0)$ satisfies
\begin{eqnarray*}
&&0\leq \rho_0\leq\bar{\rho},\quad \nabla\rho_0\in L^3(\mathbb{R}^3), \quad u_0\in D^{2,2}(\mathbb{R}^3), \\
&&\text{div}\, u_0=0, \quad d_0-d_*\in D^{3,2}(\mathbb{R}^3),\quad |d_0|=1,
\end{eqnarray*}
for a positive number $\bar\rho\in(0,\infty)$ and a constant unit vector $d_*$. Suppose further that the following compatibility condition holds
\begin{eqnarray*}
&&\Delta u_0-\n P_0-div(\n d_0\odot\n d_0+(\Delta d_0+|\n d_0|^2d_0)\otimes d_0)=\sqrt{\rho_0}g_0,
\end{eqnarray*}
for some $(P_0,g_0)\in H^1(\mathbb{R}^3)\times L^2(\mathbb{R}^3)$.

Then, there exists a positive time $T_*$, depending only on $\bar\rho$ and $\|\nabla u_0\|_{H^1}+\|\nabla d_0\|_{H^2}+\|g_0\|_{L^2}$, such that system (\ref{el}), subject to (\ref{bc})--(\ref{ic}), has a strong solution $(\rho, u,d)$, in $\mathbb R^2\times(0,T_*)$, which has the regularity properties
\begin{eqnarray*}
  &&(\nabla\rho, \partial_t\rho)\in L^\infty(0,T_*; L^3(\mathbb R^3)),\\
  &&u\in C([0,T_*]; D^{1,2}(\mathbb R^3))\cap L^\infty(0,T_*; D^{2,2}(\mathbb R^3))\cap L^2(0,T_*; D^{3,2}(\mathbb R^3)),\\
  &&\partial_tu\in L^2(0,T_*; D^{1,2}(\mathbb R^3)),\quad\partial_td\in
  L^\infty(0,T_*; H^1(\mathbb R^3))\cap L^2(0,T_*; H^2(\mathbb R^3)),\\
  &&d-d_*\in C([0,T_*]; D^{2,2}(\mathbb R^3))\cap L^\infty(0,T_*; D^{3,2}(\mathbb R^3))\cap L^2(0,T_*; D^{4,2}(\mathbb R^3)),
\end{eqnarray*}
and satisfies system (\ref{el}) pointwisely, a.e.~in $\mathbb R^3\times(0,T_*)$.

Moreover, if we assume in addition that
$$
\rho_0\in L^{\frac32}(\mathbb R^3),\quad \nabla\rho_0\in L^2(\mathbb R^3),
$$
then the strong solution $(\rho, u, d)$ stated above is unique.
\end{theorem}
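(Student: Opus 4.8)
The plan is to construct the solution via a biharmonic regularization (the "auxiliary system" mentioned in the abstract). That is, for $\varepsilon>0$ I would replace the $d$-equation's leading operator by $-\varepsilon\Delta^2 d+\Delta d$ and the $u$-equation's by $-\varepsilon\Delta^2 u+\Delta u$ (or at least the $d$-equation), so that the highest-order coupling terms $\nabla\cdot[(\Delta d+|\nabla d|^2 d)\otimes d]$ in the momentum equation and $(d\cdot\nabla)u$, $(d^T A d)d$ in the director equation are strictly subcritical relative to the new parabolic smoothing. For this regularized system, existence of a local strong solution can be obtained by a routine iteration/linearization scheme (solving the transport equation for $\rho$ by characteristics, then a linear parabolic system for $(u,d)$ with the nonlinear terms frozen at the previous step, and a Banach fixed point on a short time interval), because the biharmonic term dominates and the cancellation structure is not needed at that level. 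Call the resulting solution $(\rho_\varepsilon,u_\varepsilon,d_\varepsilon)$.

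\textbf{The core of the argument} is then to derive \emph{$\varepsilon$-independent a priori estimates} for $(\rho_\varepsilon,u_\varepsilon,d_\varepsilon)$ on a time interval $[0,T_*]$ whose length depends only on $\bar\rho$ and $\|\nabla u_0\|_{H^1}+\|\nabla d_0\|_{H^2}+\|g_0\|_{L^2}$. Here is where the intrinsic cancellation between the high-order coupling terms is essential: one tests the momentum equation with $\partial_t u_\varepsilon$ (using the compatibility condition to control $\sqrt{\rho_\varepsilon}\,\partial_t u_\varepsilon$ at $t=0$) and the director equation with $\Delta^2 d_\varepsilon$ or $\Delta(\partial_t d_\varepsilon + \cdots)$, and adds the resulting identities; the dangerous terms of the form $\int \nabla\cdot[(\Delta d_\varepsilon)\otimes d_\varepsilon]\cdot\partial_t u_\varepsilon$ and $\int (d_\varepsilon\cdot\nabla)u_\varepsilon\cdot\Delta^2 d_\varepsilon$ must cancel up to lower-order terms when the test functions are chosen consistently (this reflects the fact that the full Ericksen--Leslie coupling is energy-conservative). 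Crucially, the biharmonic terms $\varepsilon\|\Delta^2 d_\varepsilon\|_{L^2}^2$, $\varepsilon\|\nabla\Delta u_\varepsilon\|_{L^2}^2$ appear with a good sign on the left, so one discards them after extracting the cancellation. This yields a differential inequality of the form $\frac{d}{dt}\Phi_\varepsilon + (\text{dissipation}) \le C\,\mathcal P(\Phi_\varepsilon)$ with $\Phi_\varepsilon = \|\sqrt{\rho_\varepsilon}\partial_t u_\varepsilon\|_{L^2}^2 + \|\nabla u_\varepsilon\|_{H^1}^2 + \|\partial_t d_\varepsilon\|_{H^1}^2 + \|\nabla d_\varepsilon\|_{H^2}^2$ and $\mathcal P$ polynomial, from which a uniform bound on a uniform time interval follows by a continuation/Grönwall argument; one also needs to propagate the transport estimates $\|\nabla\rho_\varepsilon\|_{L^3}$, $\|\partial_t\rho_\varepsilon\|_{L^3}$ via the characteristics, using $\nabla u_\varepsilon\in L^1(0,T;L^\infty)$ obtained from the above.

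\textbf{With the uniform estimates in hand}, I would pass to the limit $\varepsilon\to0^+$: by weak-$*$ compactness and Aubin--Lions (using the uniform bounds on time derivatives) extract a subsequence converging to some $(\rho,u,d)$ with exactly the regularity listed in the theorem, and verify that the biharmonic terms $\varepsilon\Delta^2 u_\varepsilon$, $\varepsilon\Delta^2 d_\varepsilon$ vanish in the sense of distributions (since $\varepsilon$ times a bounded quantity $\to 0$), so that $(\rho,u,d)$ satisfies (\ref{el}) a.e.; the constraint $|d|=1$ is recovered by testing the limiting $d$-equation against $|d|^2-1$ as is standard for harmonic-map-type flows, or is preserved by the structure of the equation. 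Finally, for uniqueness under the extra hypotheses $\rho_0\in L^{3/2}$, $\nabla\rho_0\in L^2$, I would take two solutions with the same data, write the equations for the differences $(\bar\rho,\bar u,\bar d)=(\rho_1-\rho_2,u_1-u_2,d_1-d_2)$, and run a Grönwall estimate in a low-norm space — typically $\bar\rho$ in $L^{3/2}\cap L^2$, $\sqrt{\rho_1}\bar u$ in $L^2$, $\nabla\bar u$ in $L^2$, $\nabla\bar d$ in $L^2$ — again relying on the cancellation in the coupling terms and on the regularity of the constructed solution to close the estimate. \textbf{I expect the main obstacle} to be the energy estimate step: identifying precisely which combination of test functions makes the top-order coupling terms cancel, and controlling the (numerous) residual lower-order commutator-type terms — involving $\nabla\rho_\varepsilon$ in $L^3$ and products of three or four derivatives of $d_\varepsilon$ — by the dissipation and $\mathcal P(\Phi_\varepsilon)$ without any hidden $\varepsilon$-dependence.
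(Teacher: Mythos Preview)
Your overall architecture matches the paper's: biharmonic regularization, local solvability of the regularized system by linearization, $\varepsilon$-independent a priori estimates exploiting the cancellation structure, Aubin--Lions compactness, and a low-norm Gr\"onwall for uniqueness. Two points, however, differ from the paper in ways that matter.

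\textbf{Which equation to regularize.} The paper adds $\varepsilon\Delta^2 u$ to the \emph{momentum} equation only and leaves the director equation untouched. This is deliberate: the unregularized $d$-equation preserves $|d|=1$ exactly at the approximate level, and the estimates use this constraint repeatedly (e.g.\ $|\nabla d|^2=-d\cdot\Delta d\le|\Delta d|$, and the identities $\Delta d+|\nabla d|^2 d=(d\times\Delta d)\times d$, $(d\cdot\nabla)u-(d^TAd)d=(d\times(d\cdot\nabla)u)\times d$). Your proposal to insert $-\varepsilon\Delta^2 d$ in the director equation destroys $|d|=1$ for $\varepsilon>0$, so these simplifications are unavailable and the residual terms become genuinely harder to close; recovering $|d|=1$ only in the limit does not help with the $\varepsilon$-uniform estimates.

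\textbf{Which pairings produce the cancellation.} The pair you name --- momentum tested with $\partial_t u$ against the director equation tested with $\Delta^2 d$ --- does not cancel: one term carries $\nabla\partial_t u$, the other $\nabla u$. In the paper the cancellations occur at two different levels. First, testing the momentum equation with $\Delta u$ and the gradient of the $d$-equation with $\nabla\Delta d$ yields the elliptic-type bound $\|\nabla^2 u\|_{L^2}^2+\|\nabla^3 d\|_{L^2}^2\le C(\|\sqrt\rho\,\partial_t u\|_{L^2}^2+\|\nabla\partial_t d\|_{L^2}^2+\cdots)$; here the top-order cross terms cancel via the triple-product identity $(a\times b)\cdot c=(b\times c)\cdot a$ applied to the cross-product forms above. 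Second, differentiating both equations in $t$ and testing with $\partial_t u$ and $\Delta\partial_t d$ respectively gives the evolution of $\|\sqrt\rho\,\partial_t u\|_{L^2}^2+\|\nabla\partial_t d\|_{L^2}^2$; here the cancellation is the elementary identity $\int (\Delta\partial_t d\otimes d):\nabla\partial_t u - \int(d\cdot\nabla)\partial_t u\cdot\Delta\partial_t d=0$. Testing the (undifferentiated) momentum equation with $\partial_t u$, as you suggest, is indeed used, but there no cancellation is invoked --- the coupling term is simply estimated by $\eta\|\nabla\partial_t u\|_{L^2}^2+C_\eta\|\Delta d\|_{L^2}^2$ and the $\eta$-piece absorbed by the dissipation coming from the second pairing. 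Your uniqueness sketch is essentially the paper's (differences in $L^{3/2}$ for $\tilde\rho$, $L^2$ for $\sqrt{\rho_1}\tilde u$ and $\nabla\tilde d$), with the same cross-product cancellation reused on the difference system.
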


As we mentioned before, the main difficulty for proving system (\ref{el}) is the presence of the high order coupling terms, which
are as high as the leading terms. To overcome this difficulty, we introduce a biharmonic regularized system, see Section \ref{sec2}, below, by adding a biharmonic term $\varepsilon\Delta^2u$ to the momentum equations. With the help of this biharmonic regularization term,
the coupling terms in the regularization system are no longer high order ones, and thus one can used the standard linearization argument to solve
the regularized system. Observing that this biharmonic regularization term does not destroy the intrinsic cancellation properties of
the coupling terms, one can obtain some appropriate $\varepsilon$-independent a priori estimates to the regularized solutions, in a uniform
time interval, and as a result, by passing $\varepsilon\rightarrow0$, one obtains the strong solutions to the original
system.

The arrangement of this paper is as follows: in the next section, Section \ref{sec2}, we prove the local existence of strong solutions
to a biharmonic regularized system, and perform the $\varepsilon$-independent a priori estimates in a uniform time interval; in the last section, Section \ref{sec3}, we prove Theorem \ref{th1}.

\section {A biharmonic regularized system}
\label{sec2}
In this section, we study  the following biharmonic regularized system of the original liquid crystal system (\ref{el}):
\begin{equation}\label{EL}
\left\{
\begin{array}{l}
\partial_t\rho+u\cdot\nabla\rho=0,\\
\rho (\partial_tu+(u\cdot\n) u)+\n P-\Delta u+\varepsilon\Delta^2u\\
\qquad=-\n\cdot[\n d\odot \n d+(\Delta d+|\n d|^2d)\otimes d],\\
\text{div}\,u=0,\\
\partial_td+(u\cdot\n )d-(d\cdot\n )u=\Delta d+|\n d|^2d-(d^T A d)d,\\
|d|=1,
\end{array}
\right.
\end{equation}
where $\varepsilon\in(0,1)$ is a positive parameter.

\begin{lemma}[\textbf{Local existence}]\label{ELLOC}
Let $\underline\rho$ and $\bar\rho$ be two positive numbers, and $d_*$ a constant unit vector.
Suppose that the initial data $(\rho_0, u_0, d_0)$ satisfies
\begin{eqnarray*}
  &&\underline\rho\leq\rho_0\leq\bar\rho,\quad \nabla\rho_0\in L^3(\mathbb R^3),\quad u_0\in D^{4,2}(\mathbb R^3),\\
  &&\text{div}\,u_0=0,\quad d_0-d_*\in D^{3,2}(\mathbb R^3),\quad |d_0|=1.
\end{eqnarray*}
Set
$$
g_0=\frac{1}{\sqrt{\rho_0}}[\Delta u_0-\nabla P_0-\text{div}(\nabla d_0\odot\nabla d_0+(\Delta d_0+|\nabla d_0|^2 d_0)\otimes d_0)],
$$
for some $P_0\in H^1(\mathbb R^3)$.

Then, there exists a local strong solution $(\rho, u,d)$ to system (\ref{EL}), subject to (\ref{bc})--(\ref{ic}), on $\mathbb R^3\times(0, T_\varepsilon)$, such that $\underline\rho\leq\rho\leq\bar\rho$ and
\begin{eqnarray*}
  &&(\nabla\rho, \partial_t\rho)\in L^\infty(0,T_\varepsilon; L^3(\mathbb R^3)),\quad u\in L^\infty(0,T_\varepsilon; D^{4,2}(\mathbb R^3)),\\
  &&\partial_tu\in L^2(0,T_\varepsilon; D^{2,2}(\mathbb R^3)),\quad\partial_td\in
  L^\infty(0,T_\varepsilon; H^1(\mathbb R^3))\cap L^2(0,T_\varepsilon; H^2(\mathbb R^3)),\\
  &&d-d_*\in L^\infty(0,T_\varepsilon; D^{3,2}(\mathbb R^3))\cap L^2(0,T_\varepsilon; D^{4,2}(\mathbb R^3)),
\end{eqnarray*}
and system (\ref{EL}) is satisfied pointwisely, a.e.~in $\mathbb R^3\times(0, T_{\varepsilon})$, where $T_\varepsilon$ is a positive
constant depending only $\varepsilon, \underline\rho, \bar\rho, \|\nabla u_0\|_{H^3}, \|\nabla d_0\|_{H^2}$ and $\|g_0\|_{L^2}$.
\end{lemma}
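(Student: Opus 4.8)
The plan is to solve (\ref{EL}), for a fixed $\varepsilon\in(0,1)$, by a Picard iteration with all the coupling terms frozen at the previous step. Such a full decoupling would destroy the cancellation structure that the original system (\ref{el}) requires, but it is harmless here because the biharmonic term $\varepsilon\Delta^2u$ turns the momentum equation into a non-degenerate fourth-order parabolic system, so that every coupling term becomes of strictly lower order. Concretely, I would set $(\rho^0,u^0,d^0)\equiv(\rho_0,u_0,d_0)$ and, given $(\rho^n,u^n,d^n)$, define $(\rho^{n+1},u^{n+1},d^{n+1})$ as the solutions of three decoupled linear problems, each carrying the prescribed initial data of (\ref{ic}): the transport equation $\partial_t\rho^{n+1}+u^n\cdot\nabla\rho^{n+1}=0$; the linear parabolic equation $\partial_td^{n+1}+(u^n\cdot\nabla)d^{n+1}-\Delta d^{n+1}=(d^n\cdot\nabla)u^n+|\nabla d^n|^2d^n-((d^n)^TA^nd^n)d^n$ with $A^n=\frac12(\nabla u^n+(\nabla u^n)^T)$; and the linear Stokes-type system $\rho^{n+1}(\partial_tu^{n+1}+(u^n\cdot\nabla)u^{n+1})+\nabla P^{n+1}-\Delta u^{n+1}+\varepsilon\Delta^2u^{n+1}=-\nabla\cdot[\nabla d^n\odot\nabla d^n+(\Delta d^n+|\nabla d^n|^2d^n)\otimes d^n]$ together with $\text{div}\,u^{n+1}=0$, where the initial pressure is chosen so that $\text{div}(\partial_tu^{n+1})|_{t=0}=0$ (an elliptic problem solvable because $\rho_0\geq\underline\rho>0$).

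Each linear problem would be handled by classical methods. The transport equation, with the Lipschitz divergence-free coefficient $u^n$, has a unique solution satisfying $\underline\rho\leq\rho^{n+1}\leq\bar\rho$; differentiating it and applying Gr\"onwall's inequality (using $\nabla u^n\in L^1(0,T;L^\infty)$, valid since $\nabla u^n\in L^\infty(0,T;H^3)$) gives $\nabla\rho^{n+1},\partial_t\rho^{n+1}\in L^\infty(0,T;L^3)$. For $d^{n+1}$ I would first check, by Sobolev embeddings in $\mathbb{R}^3$ and the bound $\|d^n\|_{L^\infty}\leq C$, that the right-hand side lies in $L^\infty(0,T;H^1)\cap L^2(0,T;D^{2,2})$; linear parabolic regularity then yields $d^{n+1}-d_*\in L^\infty(0,T;D^{3,2})\cap L^2(0,T;D^{4,2})$ and $\partial_td^{n+1}\in L^\infty(0,T;H^1)\cap L^2(0,T;H^2)$. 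The fourth-order system for $u^{n+1}$ is non-degenerate since $0<\underline\rho\leq\rho^{n+1}\leq\bar\rho$; solving it by a Galerkin scheme in the divergence-free class (which recovers $P^{n+1}$ a posteriori) and testing against $\partial_tu^{n+1}$ controls $\|\nabla u^{n+1}\|_{L^\infty_tL^2}$, $\sqrt\varepsilon\|\Delta u^{n+1}\|_{L^\infty_tL^2}$ and $\|\partial_tu^{n+1}\|_{L^2_tL^2}$. Differentiating the system in time and testing again against $\partial_tu^{n+1}$ then gives $\partial_tu^{n+1}\in L^\infty(0,T;L^2)\cap L^2(0,T;D^{2,2})$, provided $\partial_tu^{n+1}|_{t=0}\in L^2$ --- which holds thanks to $u_0\in D^{4,2}$, $P_0\in H^1$, $d_0-d_*\in D^{3,2}$ and the absence of vacuum --- where $\partial_t$ of the director forcing is controlled by the $d^n$-regularity just obtained together with $\nabla d^n\in L^\infty_{t,x}$. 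Finally, $\nabla P^{n+1}\in L^\infty(0,T;L^2)$ follows by taking the divergence of the momentum equation, and then rewriting it as $\varepsilon\Delta^2u^{n+1}=-\rho^{n+1}\partial_tu^{n+1}-\rho^{n+1}(u^n\cdot\nabla)u^{n+1}-\nabla P^{n+1}+\Delta u^{n+1}-\nabla\cdot[\cdots]$ and invoking elliptic regularity upgrades this to $u^{n+1}\in L^\infty(0,T;D^{4,2})$.

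The crux is to make these bounds uniform in $n$ on a common interval. I would fix a radius $R$ comparable to $\|\nabla u_0\|_{H^3}+\|\nabla d_0\|_{H^2}+\|g_0\|_{L^2}$, repeat the estimates above quantitatively, and show that the iteration map leaves invariant the ball of radius $R$ in the solution class over $[0,T_\varepsilon]$ for $T_\varepsilon=T_\varepsilon(R,\underline\rho,\bar\rho,\varepsilon)$ small; every nonlinear contribution is arranged to carry an explicit factor $T^\alpha$ with $\alpha>0$ (directly or after interpolation) or to be absorbed by the available parabolic dissipation, using Gagliardo--Nirenberg inequalities and the fact that $\rho$ may be differentiated only once, in $L^3$. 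Writing $\delta f^{n+1}=f^{n+1}-f^n$, the differences solve the analogous linear equations with data depending linearly on $(\delta\rho^{n},\delta u^{n},\delta u^{n-1},\delta d^{n})$ and with bounded coefficients; estimating $\delta\rho^{n+1}$ in $L^\infty_tL^3$, $\delta d^{n+1}$ in $L^\infty_tL^2\cap L^2_tH^1$ and $\delta u^{n+1}$ in $L^\infty_tL^2\cap L^2_tD^{2,2}$ --- and using the $\varepsilon\|\Delta\delta u^{n+1}\|_{L^2}^2$ dissipation to move two derivatives onto $\delta u^{n+1}$ whenever the director forcing differentiates $\delta d^{n}$ --- would give a contraction estimate $\|(\delta\rho^{n+1},\delta u^{n+1},\delta d^{n+1})\|\leq C(R,\varepsilon)\,T_\varepsilon^\alpha\,\|(\delta\rho^{n},\delta u^{n},\delta d^{n})\|$ after shrinking $T_\varepsilon$ once more. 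Passing to the limit, the resulting $(\rho,u,d)$ has all the stated regularity, satisfies $\underline\rho\leq\rho\leq\bar\rho$, and solves (\ref{EL}), (\ref{bc})--(\ref{ic}) pointwise a.e.; the constraint $|d|=1$ is recovered by observing that $\phi=|d|^2-1$, after dotting the fourth equation with $2d$ and using $\text{div}\,u=0$, $(d\cdot\nabla)u\cdot d=d^TAd$ and $d\cdot\Delta d=\frac12\Delta|d|^2-|\nabla d|^2$, solves $\partial_t\phi+(u\cdot\nabla)\phi-\Delta\phi=2(|\nabla d|^2-d^TAd)\phi$ with $\phi|_{t=0}=0$, hence $\phi\equiv0$ by a Gr\"onwall energy estimate.

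I expect the main obstacle to be the uniform a priori estimates of the third step --- in particular the time-differentiated momentum estimate and the recovery of $u\in L^\infty(0,T_\varepsilon;D^{4,2})$ and $\nabla P\in L^\infty(0,T_\varepsilon;L^2)$ from the equation --- since this is where the weak control on $\rho$ (only $\nabla\rho\in L^3$, so at most one derivative on $\rho$) must be reconciled with the third-order director terms appearing in the momentum forcing. No intrinsic cancellation is needed at this stage, which is precisely the purpose of the regularization, but the estimates are lengthy, and some care is required to ensure that $T_\varepsilon$ and all intermediate constants depend only on $\varepsilon$, $\underline\rho$, $\bar\rho$ and the listed norms of the initial data.
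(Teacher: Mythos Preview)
Your proposal is correct and follows exactly the ``standard linearization argument'' that the paper invokes; indeed, the paper omits the proof entirely, remarking only that the biharmonic term $\varepsilon\Delta^2u$ makes the coupling terms lower order so that a linearization scheme goes through. Your sketch therefore fills in precisely what the authors leave out, including the recovery of the constraint $|d|=1$ via the scalar equation for $\phi=|d|^2-1$.
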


\begin{proof}
Note that, thanks to the presence of the biharmonic term $\varepsilon\Delta^2u$ in the momentum equations of system (\ref{EL}), the coupling terms between the velocity and the director in the momentum equations
are lower order terms, compared with the leading terms. Therefore,
one can use the standard linearization argument to show the local
well-posedness of strong solutions to system (\ref{EL}), subject to (\ref{bc})--(\ref{ic}). The proof is standard, and thus is omitted here.
\end{proof}

\begin{lemma}[\textbf{Basic energy identity}]\label{LEM1}
Let $(\rho, u, d)$ be a strong solution to system (\ref{EL}), on $\mathbb{R}^3\times (0, T)$. Then, it holds that
\begin{eqnarray}\label{BE}
\frac 12\frac{d}{dt}\int_{\mathbb{R}^3}(\rho |u|^2+|\n d|^2)dx+\int_{\mathbb{R}^3}(|\n u|^2+\varepsilon|\Delta u|^2+|\Delta d+|\n d|^2d|^2)dx=0,
\end{eqnarray}
for any $t\in(0, T)$.
\end{lemma}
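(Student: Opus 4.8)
The plan is to carry out the standard energy argument: test the momentum equation against $u$, test the director equation against the tension field $h:=\Delta d+|\n d|^2d$, add the two resulting identities, and observe that the high order coupling terms cancel. Throughout I would use the pointwise consequences of $|d|=1$, namely $d\cdot\p_td=0$, $d\cdot\p_id=0$ and $d\cdot\Delta d=-|\n d|^2$; in particular $h=(I-d\otimes d)\Delta d$, so that $h\cdot d=0$.

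First I would multiply the momentum equation by $u$ and integrate over $\mathbb R^3$. Combining the continuity equation with $\text{div}\,u=0$ (equivalently $\p_t\rho+\text{div}(\rho u)=0$), the inertial terms become $\frac12\frac{d}{dt}\m\rho|u|^2\,dx$; the pressure term vanishes after one integration by parts since $\text{div}\,u=0$; and $-\Delta u$ and $\varepsilon\Delta^2u$ contribute $\m|\n u|^2\,dx$ and $\varepsilon\m|\Delta u|^2\,dx$. For the coupling term $-\n\cdot[\n d\odot\n d+h\otimes d]$ I would integrate by parts once to move the outer divergence onto $\n u$, and then, using $\text{div}\,u=0$ and $|d|=1$ — chiefly the identities $\m\Delta d\cdot(u\cdot\n)d\,dx=-\m(\p_id\cdot\p_jd)\p_ju^i\,dx$ and $\m|\n d|^2d\cdot(u\cdot\n)d\,dx=0$ — rewrite it as $-\m h\cdot(u\cdot\n)d\,dx+\m h\cdot(d\cdot\n)u\,dx$. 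This yields
\[
\frac12\frac{d}{dt}\m\rho|u|^2\,dx+\m(|\n u|^2+\varepsilon|\Delta u|^2)\,dx=-\m h\cdot(u\cdot\n)d\,dx+\m h\cdot(d\cdot\n)u\,dx.
\]

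Next I would multiply the director equation $\p_td+(u\cdot\n)d-(d\cdot\n)u=h-(d^TAd)d$ by $-h$ and integrate. Since $h\cdot d=0$, the term $(d^TAd)d$ contributes nothing, and since $d\cdot\p_td=0$ one has $-\m h\cdot\p_td\,dx=-\m\Delta d\cdot\p_td\,dx=\m\n d:\n\p_td\,dx=\frac12\frac{d}{dt}\m|\n d|^2\,dx$, whence
\[
\frac12\frac{d}{dt}\m|\n d|^2\,dx+\m|h|^2\,dx=\m h\cdot(u\cdot\n)d\,dx-\m h\cdot(d\cdot\n)u\,dx.
\]
Adding the two displayed identities, the coupling terms cancel identically, and since $h=\Delta d+|\n d|^2d$ this is exactly (\ref{BE}).

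All the integrations by parts above are legitimate for a strong solution thanks to the regularity and spatial decay built into that notion. The step I expect to demand the most care is the rewriting of the coupling term in the momentum balance: one must verify that the part coming from $\n d\odot\n d$ equals precisely $-\m h\cdot(u\cdot\n)d\,dx$ and the part coming from $h\otimes d$ equals precisely $\m h\cdot(d\cdot\n)u\,dx$, so that they exactly annihilate the terms produced by testing the director equation. This is the intrinsic cancellation between the high order terms referred to in the introduction, and it rests on $\text{div}\,u=0$, on $|d|=1$, and on keeping the conventions for $\odot$, $\otimes$ and the divergence of a matrix field consistent.
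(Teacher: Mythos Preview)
Your proposal is correct and follows essentially the same approach as the paper: both test the momentum equation against $u$ and the director equation against $-(\Delta d+|\n d|^2d)$, then exploit $|d|=1$ and $\text{div}\,u=0$ to see the coupling terms cancel. The only cosmetic difference is that the paper leaves the coupling term from the momentum balance in the form $\int[\n d\odot\n d+h\otimes d]:\n u\,dx$ and then manipulates the director identity (via $\text{div}(\n d\odot\n d)=\n(|\n d|^2/2)+\n d\,\Delta d$ and $(d^TAd)d\cdot\Delta d=-(d\cdot\n)u\cdot(|\n d|^2d)$) to match it, whereas you push the same manipulation into the momentum step and use $h\cdot d=0$ to kill the $(d^TAd)d$ term immediately.
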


\begin{proof}
Multiplying $(\ref{EL})_2$ by $u$, and integrating the resultant over $\mathbb{R}^3$, then it follows from integration by parts and using the
divergence-free condition that
\begin{eqnarray}
&&\frac12\frac{d}{dt}\int_{\mathbb{R}^3}\rho|u|^2dx+\int_{\mathbb{R}^3}(|\n u|^2+\varepsilon|\Delta u|^2)dx\nonumber\\
&=&\int_{\mathbb{R}^3}[\n d\odot \n d+(\Delta d+|\n d|^2d)\otimes d]:\n u dx.\label{li1}
\end{eqnarray}
Using the condition $|d|=1$, one can easily verify that $d\cdot\Delta d=-|\nabla d|^2$. It is straightforward to check the following identity
$$
\text{div}\,(\nabla d\odot\nabla d)=\nabla\left(\frac{|\nabla d|^2}{2}\right)+\nabla d\Delta d.
$$
Thanks to these, multiplying $(\ref{EL})_4$ by $-(\Delta d+|\n d|^2d)$, integrating the resultant over $\mathbb{R}^3$, then it follows from
integration by parts, and using the divergence-free condition and the constraint $|d|=1$ that
\begin{eqnarray*}
&&\frac12\frac{d}{dt}\int_{\mathbb{R}^3}|\n d|^2dx+\int_{\mathbb{R}^3}(|\Delta d+|\n d|^2d|^2)dx\\
&=&\int_{\mathbb R^3}[(u\cdot\nabla)d-(d\cdot\nabla)u+(d^TAd)d]\cdot(\Delta d+|\nabla d|^2d)dx\\
&=&\int_{\mathbb R^3}[(u\cdot\nabla)d\cdot\Delta d-(d\cdot\nabla)u\cdot\Delta d+(d^TAd)d\cdot\Delta d]dx\\
&=&\int_{\mathbb R^3}\bigg\{u\cdot\left[\text{div}\,(\nabla d\odot\nabla d)-\nabla\left(\frac{|\nabla d|^2}{2}\right)\right]-(d\cdot\nabla)u\cdot\Delta d -(d^TAd)|\nabla d|^2\bigg\}dx \\
&=&-\int_{\mathbb R^3}[\nabla u:\nabla d\odot\nabla d+(d\cdot\nabla)u\cdot(\Delta d+|\nabla d|^2 d)]dx\\
&=&-\int_{\mathbb{R}^3}[\n d\odot \n d+(\Delta d+|\n d|^2d)\otimes d]:\n u dx.
\end{eqnarray*}
Summing the above equality with (\ref{li1}) leads to the conclusion.
\end{proof}

\begin{lemma} \label{LEM2}
Let $(\rho, u, d)$ be a strong solution to system (\ref{EL}), on $\mathbb{R}^3\times (0, T)$. Then, for any $t\in(0,T)$, we have
\begin{eqnarray*}
&& \frac{d}{dt}\int_{\mathbb{R}^3}(|\n u|^2+\varepsilon|\Delta u|^2)dx+\int_{\mathbb{R}^3}\rho|\partial_tu|^2dx\\
&\leq&\eta(\|\nabla\partial_tu\|_{L^2}^2  +\|\nabla^2u\|_2^2) +C_\eta(\|\nabla u\|_2^6+\|\Delta d\|_2^2),
\end{eqnarray*}
for any $\eta\in(0,\frac12)$, where $C_\eta$ is a positive constant depending only on $\bar\rho$ and $\eta$, and
\begin{eqnarray*}
  && \frac{d}{dt}\int_{\mathbb R^3}|\Delta d|^2dx+\int_{\mathbb R^3}|\nabla\Delta d|^2dx\leq C(\|\nabla u\|_{L^2}^6+\|\nabla^2d\|_{L^2}^6+\|\nabla^2u\|_{L^2}^2),
\end{eqnarray*}
for an absolute positive constant $C$.
\end{lemma}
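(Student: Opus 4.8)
I would derive both inequalities as first-order energy estimates: test the two evolution equations of $(\ref{EL})$ against suitable higher-order multipliers, and then control every nonlinear term by H\"older's inequality, the Gagliardo--Nirenberg inequalities $\|f\|_{L^3}\lesssim\|f\|_{L^2}^{1/2}\|\n f\|_{L^2}^{1/2}$, $\|f\|_{L^4}\lesssim\|f\|_{L^2}^{1/4}\|\n f\|_{L^2}^{3/4}$, $\|f\|_{L^6}\lesssim\|\n f\|_{L^2}$ and the Agmon-type bound $\|u\|_{L^\infty}\lesssim\|\n u\|_{L^2}^{1/2}\|\n^2u\|_{L^2}^{1/2}$ on $\mathbb R^3$, the whole-space identities $\|\n^2d\|_{L^2}=\|\Delta d\|_{L^2}$ and $\|\n^3d\|_{L^2}=\|\n\Delta d\|_{L^2}$, the bound $\rho\le\bar\rho$, and Young's inequality. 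For the first estimate the multiplier is $\p_tu$, applied to $(\ref{EL})_2$; for the second it is $\Delta^2d$, or equivalently one applies $\Delta$ to $(\ref{EL})_4$ and tests with $\Delta d$. In both cases $\text{div}\,u=0$ kills the top-order pressure/transport contributions, namely $\m\n P\cdot\p_tu\,dx=0$ and $\m(u\cdot\n\Delta d)\cdot\Delta d\,dx=\frac12\m u\cdot\n|\Delta d|^2\,dx=0$, while the terms $-\Delta u$, $\varepsilon\Delta^2u$ and $\Delta^2d$ produce exactly the time derivatives and the dissipation on the left-hand sides.

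For the first inequality, testing $(\ref{EL})_2$ with $\p_tu$ and integrating by parts gives
\begin{align*}
\m\rho|\p_tu|^2\,dx+\frac12\frac{d}{dt}\m(|\n u|^2+\varepsilon|\Delta u|^2)\,dx
&=-\m\rho(u\cdot\n)u\cdot\p_tu\,dx\\
&\quad+\m[\n d\odot\n d+(\Delta d+|\n d|^2d)\otimes d]:\n\p_tu\,dx.
\end{align*}
The convective term is estimated, via Cauchy--Schwarz, $\rho\le\bar\rho$ and the Agmon-type inequality, by $C\|\sqrt\rho\,\p_tu\|_{L^2}\|\n u\|_{L^2}^{3/2}\|\n^2u\|_{L^2}^{1/2}$, and Young's inequality turns this into an absorbable fraction of $\m\rho|\p_tu|^2\,dx$ plus $\eta\|\n^2u\|_{L^2}^2+C_\eta\|\n u\|_{L^2}^6$. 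For the director coupling term the decisive point is the algebraic identity forced by $|d|=1$: from $d\cdot\Delta d=-|\n d|^2$ one obtains $|\Delta d+|\n d|^2d|^2=|\Delta d|^2-|\n d|^4$, hence $\|\Delta d+|\n d|^2d\|_{L^2}\le\|\Delta d\|_{L^2}$ and, crucially, $\|\n d\|_{L^4}^4=\|\Delta d\|_{L^2}^2-\|\Delta d+|\n d|^2d\|_{L^2}^2\le\|\Delta d\|_{L^2}^2$; so this term is at most $\|\n\p_tu\|_{L^2}(\|\n d\|_{L^4}^2+\|\Delta d\|_{L^2})\le\eta\|\n\p_tu\|_{L^2}^2+C_\eta\|\Delta d\|_{L^2}^2$, with a constant depending only on $\bar\rho$ and $\eta$.

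For the second inequality, after applying $\Delta$ to $(\ref{EL})_4$, testing with $\Delta d$ and extracting the transport cancellation, $\frac12\frac{d}{dt}\|\Delta d\|_{L^2}^2+\|\n\Delta d\|_{L^2}^2$ equals a sum of terms $\pm\m\Delta(\mathcal N)\cdot\Delta d\,dx$ with $\mathcal N\in\{(u\cdot\n)d,\,(d\cdot\n)u,\,|\n d|^2d,\,(d^TAd)d\}$. Each is expanded by the Leibniz rule, and every resulting term is bounded by three- and four-factor H\"older inequalities (with exponents such as $(2,6,3)$ and $(2,4,4)$) together with the Gagliardo--Nirenberg and whole-space identities above; Young's inequality then absorbs a small multiple of $\|\n\Delta d\|_{L^2}^2$ into the left side and leaves a remainder of the form $C(\|\n u\|_{L^2}^6+\|\n^2d\|_{L^2}^6+\|\n^2u\|_{L^2}^2)$. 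For instance $\m((\Delta u\cdot\n)d)\cdot\Delta d\,dx\lesssim\|\n^2u\|_{L^2}\|\n^2d\|_{L^2}\|\Delta d\|_{L^2}^{1/2}\|\n\Delta d\|_{L^2}^{1/2}$, while the terms built from $|\n d|^2\Delta d$ and $(\Delta|\n d|^2)d$ are all dominated, after expansion and use of $d\cdot\Delta d=-|\n d|^2$, by $C\|\n^2d\|_{L^2}^3\|\n\Delta d\|_{L^2}$; both fit the budget after Young.

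The one genuinely delicate point, which I expect to be the main obstacle, is that only $\|\n^2u\|_{L^2}$ — not $\|\n^3u\|_{L^2}$ — is available on the right-hand side of the second estimate, whereas the Leibniz expansion of $\Delta[(d\cdot\n)u]$ contains $(d\cdot\n)\Delta u$ and that of $\Delta[(d^TAd)d]$ contains a term with $\Delta A\sim\n^3u$. These must be integrated by parts in $x$ to transfer the extra derivative onto $d$: for example
$$
\m(d\cdot\n\Delta u)\cdot\Delta d\,dx=-\m\Delta u^j\,\p_k(d^k\Delta d^j)\,dx,
$$
whose principal part $-\m\Delta u^j\,d^k\,\p_k\Delta d^j\,dx$ is, since $|d|\le1$, bounded by $\|\n^2u\|_{L^2}\|\n\Delta d\|_{L^2}\le\eta\|\n\Delta d\|_{L^2}^2+C_\eta\|\n^2u\|_{L^2}^2$ (the remaining piece, carrying $\text{div}\,d$, being of strictly lower order); the $\Delta A$-term is handled in exactly the same way. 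Once these two terms are arranged so that at most one factor of $\|\n\Delta d\|_{L^2}$ (to be absorbed) and at most $\|\n^2u\|_{L^2}$ survive, everything else falls within the stated bounds, and summing up — using $|d|=1$ where needed — yields both inequalities.
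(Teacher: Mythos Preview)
Your proposal is correct, and for the first inequality it coincides with the paper's argument (test $(\ref{EL})_2$ with $\partial_tu$; your $\|\n d\|_{L^4}^4\le\|\Delta d\|_{L^2}^2$ is just the integrated form of the paper's pointwise $|\n d|^2\le|\Delta d|$).

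For the second inequality your route differs slightly from the paper's, and the comparison is instructive. You apply $\Delta$ to $(\ref{EL})_4$ and test with $\Delta d$, which produces the genuine $\n^3u$ terms $(d\cdot\n)\Delta u$ and $d^T(\Delta A)d$; you then correctly repair this by integrating by parts to push a derivative back onto $d$. The paper instead applies only $\n$ to $(\ref{EL})_4$ and tests with $-\n\Delta d$. This yields the identical left-hand side (the two choices differ by one global integration by parts), but arranges the nonlinear side as $\int\n[(u\cdot\n)d-(d\cdot\n)u+(d^TAd)d-|\n d|^2d]:\n\Delta d\,dx$, in which the worst factor is $\n^2u$ --- the ``delicate point'' you anticipate simply never appears. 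The paper then bounds the integrand pointwise by $C[(|u|+|\n d|)(|\n u|+|\n^2d|)+|\n^2u|]$ and closes with a single H\"older/Sobolev/Young step. So both arguments work; yours trades a small amount of extra integration-by-parts bookkeeping (and the need to check that the lower-order remainders fit the budget, which they do) for the transport cancellation $\int(u\cdot\n)\Delta d\cdot\Delta d\,dx=0$, while the paper's one-derivative placement avoids both the $\n^3u$ issue and the need for any cancellation at the cost of keeping the $|u||\n^2d|$ term explicit.
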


\begin{proof}
Multiplying $(\ref{EL})_2$ by $\partial_tu$, and integrating the resultant over $\mathbb R^3$, then it follows from integration by parts that
\begin{eqnarray}\label{ute}
&&\frac12\frac{d}{dt}\int_{\mathbb{R}^3}(|\n u|^2+\varepsilon|\Delta u|^2)dx+\int_{\mathbb{R}^3}\rho|\partial_tu|^2dx\nonumber\\
&=&\int_{\mathbb{R}^3}\big\{-\rho (u\cdot\n) u\cdot \partial_tu+[\n d\odot\n d+(\Delta d+|\n d|^2d)\otimes d]:\n \partial_tu\big\}dx\nonumber\\
&\leq&\int_{\mathbb R^3}[|\rho||u||\nabla u||\partial_tu|+(2|\nabla d|^2+|\Delta d|)|\nabla\partial_tu|]dx\nonumber\\
&\leq&\int_{\mathbb R^3}(|\rho||u||\nabla u||\partial_tu|+3 |\Delta d| |\nabla\partial_tu|)dx,
\end{eqnarray}
where in the last step, we have used the fact that $|\nabla d|^2=-d\cdot\Delta d\leq|\Delta d|$, guaranteed by the constraint $|d|=1$. By the H\"older, Sobolev and Young inequalities, we deduce
\begin{eqnarray*}
&&\int_{\mathbb R^3}(|\rho||u||\nabla u||\partial_tu|+3 |\Delta d| |\nabla\partial_tu|)dx\\
&\leq&\|\sqrt\rho\|_{L^\infty}\|u\|_{L^6}\|\nabla u\|_{L^3}\|\sqrt\rho\partial_t u\|_{L^2}+3\|\Delta d\|_{L^2}\|\nabla\partial_tu\|_{L^2}\\
&\leq&C\|\nabla u\|_{L^2}^{\frac32}\|\nabla^2 u\|_{L^2}^{\frac12}\|\sqrt\rho\partial_tu\|_{L^2}+3\|\Delta d\|_{L^2}\|\nabla\partial_tu\|_{L^2}\\
&\leq&\eta(\|\nabla\partial_tu\|_{L^2}^2+\|\sqrt\rho\partial_tu\|_{L^2}^2 +\|\nabla^2u\|_2^2) +C_\eta(\|\nabla u\|_2^6+\|\Delta d\|_2^2),
\end{eqnarray*}
for any $\eta>0$, where $C_\eta$ is a positive constant depending only on $\eta$ and $\bar\rho$. Substituting the above inequality into (\ref{ute}), and choosing $\eta\in(0,\frac12)$ yields the first conclusion.

Recalling that $|\nabla d|^2\leq|\Delta d|$, guaranteed by $|d|=1$, by simple calculations, one has
\begin{align*}
  |\nabla[(u\cdot\nabla)d&-(d\cdot\nabla)u+(d^TAd)d-|\nabla d|^2d]|\\
  \leq&C[(|u|+|\nabla d|)(|\nabla u|+|\nabla^2d|)+|\nabla^2u|].
\end{align*}
Applying the operator $\nabla$ to $(\ref{EL})_4$, multiplying the resultant by $-\nabla\Delta d$, and integrating over $\mathbb R^3$, then it follows from integration by parts, and using the above inequality that
\begin{eqnarray*}
  &&\frac12\frac{d}{dt}\int_{\mathbb R^3}|\Delta d|^2dx+\int_{\mathbb R^3}|\nabla\Delta d|^2dx\\
  &=&\int_{\mathbb R^3}\nabla[(u\cdot\nabla)d-(d\cdot\nabla)u+(d^TAd)d-|\nabla d|^2d] :\nabla\Delta d\,dx\\
  &\leq&C\int_{\mathbb R^3}[(|u|+|\nabla d|)(|\nabla u|+|\nabla^2d|)+|\nabla^2u|]|\nabla\Delta d|dx.
\end{eqnarray*}
By the H\"older, Sobolev and Young inequalities, we further deduce
\begin{eqnarray*}
  &&\frac12\frac{d}{dt}\int_{\mathbb R^3}|\Delta d|^2dx+\int_{\mathbb R^3}|\nabla\Delta d|^2dx\\
  &\leq&C[(\|u\|_{L^6}+\|\nabla d\|_{L^6})(\|\nabla u\|_{L^3}+\|\nabla^2d\|_{L^3})+\|\nabla^2u\|_{L^2}]\|\nabla\Delta d\|_{L^2}\\
  &\leq&C(\|\nabla u\|_{L^2}+\|\nabla^2d\|_{L^2})^{\frac32} (\|\nabla^2 u\|_{L^2}+\|\nabla\Delta d\|_{L^2})^{\frac12}\|\nabla\Delta d\|_{L^2}\\
  &&+C\|\nabla^2u\|_{L^2}]\|\nabla\Delta d\|_{L^2}\\
  &\leq&\frac12\|\nabla\Delta d\|_{L^2}^2+C(\|\nabla u\|_{L^2}^6+\|\nabla^2d\|_{L^2}^6+\|\nabla^2u\|_{L^2}^2),
\end{eqnarray*}
which implies the second conclusion. This completes the proof of Lemma \ref{LEM2}.
\end{proof}

\begin{lemma} \label{LEM3}
Let $(u, d, \rho)$ be a strong solutions to system $(\ref{EL})$, on
$\mathbb{R}^3\times(0,T)$. Then, we have the estimate
\begin{eqnarray*}
&&\|\n^2 u\|^2_{L^2}+\varepsilon\|\n^3u\|^2_{L^2}+\|\n^3 d\|^2_{L^2}\\
&\leq& C(\|\n \partial_td\|^2_{L^2}+\|\sqrt\rho \partial_tu\|^2_{L^2}+\|\n u\|^6_{L^2}+\|\nabla^2 d\|^6_{L^2}),
\end{eqnarray*}
for $t\in(0,T)$, where $C$ is a positive constant depending only on $\bar\rho$.
\end{lemma}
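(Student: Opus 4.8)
The plan is to produce two elliptic‑type estimates --- one for $u$ from the momentum equation $(\ref{EL})_2$, one for $d$ from the director equation $(\ref{EL})_4$ --- each of which, after integrations by parts, contains the single cubic integral $\int_{\mathbb{R}^3}\Delta d\cdot(d\cdot\nabla)\Delta u\,dx$, and contains it with opposite signs; adding the two estimates makes this term disappear, and everything that survives is either absorbable into the left‑hand side or already of the form appearing on the right‑hand side of the lemma. This is the higher order analogue of the cancellation exploited in Lemma \ref{LEM1}, and it is exactly the point at which one must not handle the high order coupling terms separately. The biharmonic regularization is harmless: tested against $-\Delta u$ it only contributes the good quantity $\varepsilon\|\nabla^3u\|_{L^2}^2$ on the left, and no negative power of $\varepsilon$ is ever needed.

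For the momentum estimate, I multiply $(\ref{EL})_2$ by $-\Delta u$ and integrate over $\mathbb{R}^3$. The pressure term drops ($-\Delta u$ is divergence free), and integration by parts turns the viscous and biharmonic terms into $\|\nabla^2u\|_{L^2}^2$ and $\varepsilon\|\nabla^3u\|_{L^2}^2$ (using $\|\nabla^2u\|_{L^2}=\|\Delta u\|_{L^2}$ and $\|\nabla^3u\|_{L^2}=\|\nabla\Delta u\|_{L^2}$). The inertial terms $\int_{\mathbb{R}^3}\rho(\partial_tu+(u\cdot\nabla)u)\cdot\Delta u$ are bounded, via H\"older, Gagliardo--Nirenberg and Young, by $\eta\|\nabla^2u\|_{L^2}^2+C_\eta(\|\sqrt\rho\,\partial_tu\|_{L^2}^2+\|\nabla u\|_{L^2}^6)$, as in Lemma \ref{LEM2}. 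In the stress term $\int_{\mathbb{R}^3}\nabla\cdot[\nabla d\odot\nabla d+(\Delta d+|\nabla d|^2d)\otimes d]\cdot\Delta u$ I expand the divergence: the gradient piece $\nabla(|\nabla d|^2/2)$ pairs to zero against the divergence‑free $\Delta u$; every term except $(d\cdot\nabla)\Delta d$ carries at most one factor $\nabla^2d$ (times $\nabla d$, $d$ or $|\nabla d|^2$) and is dominated, after H\"older and Gagliardo--Nirenberg, by $C\|\nabla^2d\|_{L^2}^{3/2}\|\nabla^3d\|_{L^2}^{1/2}\|\nabla^2u\|_{L^2}+C\|\nabla^2d\|_{L^2}^3\|\nabla^2u\|_{L^2}\le\eta(\|\nabla^2u\|_{L^2}^2+\|\nabla^3d\|_{L^2}^2)+C_\eta\|\nabla^2d\|_{L^2}^6$; finally, integrating the transport derivative by parts,
\[
\int_{\mathbb{R}^3}(d\cdot\nabla)\Delta d\cdot\Delta u\,dx=-\int_{\mathbb{R}^3}(\text{div}\,d)\,\Delta d\cdot\Delta u\,dx-\int_{\mathbb{R}^3}\Delta d\cdot(d\cdot\nabla)\Delta u\,dx,
\]
the first integral being again of the absorbable type. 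Collecting, $\|\nabla^2u\|_{L^2}^2+\varepsilon\|\nabla^3u\|_{L^2}^2$ is bounded by absorbable terms minus $\int_{\mathbb{R}^3}\Delta d\cdot(d\cdot\nabla)\Delta u\,dx$.

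For the director estimate, since $\nabla d$ decays at infinity we have $\|\nabla^3d\|_{L^2}=\|\nabla\Delta d\|_{L^2}$, so I write $\|\nabla^3d\|_{L^2}^2=\int_{\mathbb{R}^3}\nabla\Delta d\cdot\nabla(\Delta d)\,dx$ and substitute $\Delta d=\partial_td+(u\cdot\nabla)d-(d\cdot\nabla)u-|\nabla d|^2d+(d^TAd)d$ from $(\ref{EL})_4$. The $\partial_td$‑contribution is $\le\eta\|\nabla^3d\|_{L^2}^2+C_\eta\|\nabla\partial_td\|_{L^2}^2$; the contributions of $(u\cdot\nabla)d$ and $|\nabla d|^2d$ are products of $\nabla^3d$ with lower order quantities built from $u,\nabla u,\nabla d,\nabla^2d$ and are absorbable as above; in the contribution of $(d^TAd)d$ the only worrying term (the one in which the extra derivative falls on $A$) is tamed not by cancellation but by the constraint $|d|=1$: differentiating $d\cdot\Delta d=-|\nabla d|^2$ gives $d\cdot\partial_k\Delta d=-\partial_k|\nabla d|^2-\partial_kd\cdot\Delta d$, which reveals that term to be of order $|\nabla d|\,|\nabla^2d|\,|\nabla^2u|$, hence again absorbable; and the contribution of $-(d\cdot\nabla)u$, after moving both derivatives of $\Delta d$ onto the other factor, equals $\int_{\mathbb{R}^3}\Delta d\cdot(d\cdot\nabla)\Delta u\,dx$ plus absorbable terms. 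Hence $\|\nabla^3d\|_{L^2}^2$ is bounded by absorbable terms plus $\int_{\mathbb{R}^3}\Delta d\cdot(d\cdot\nabla)\Delta u\,dx$. Adding this to the momentum estimate cancels the cubic integral, and choosing $\eta$ small enough to absorb the remaining $\eta$‑terms into the left yields the lemma.

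The step I expect to be the real obstacle is precisely this cancellation. Treated separately, the momentum equation forces $\|\nabla^2u\|_{L^2}$ to depend on $\|\nabla^3d\|_{L^2}$ through the genuinely third order term $(d\cdot\nabla)\Delta d$, and the director equation forces $\|\nabla^3d\|_{L^2}$ to depend on $\|\nabla^2u\|_{L^2}$ through $(d\cdot\nabla)u$, each with an $O(1)$ constant, so no naive iteration between the two estimates closes; one genuinely has to recognize that both terms collapse, after integration by parts, to the one integral $\int_{\mathbb{R}^3}\Delta d\cdot(d\cdot\nabla)\Delta u\,dx$ appearing with opposite signs. What remains is bookkeeping: noting that $(d\cdot\nabla)\Delta d$ is the only third order term in $d$ on the fluid side, and using $d\cdot\Delta d=-|\nabla d|^2$ together with its derivative --- which is what makes $(d^TAd)d$ innocuous --- plus the usual H\"older, Gagliardo--Nirenberg and Young inequalities.
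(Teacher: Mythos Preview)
Your argument is correct and arrives at the same estimate, but the mechanism you use to exhibit the cancellation differs from the paper's. You isolate the single cubic integral $\int_{\mathbb{R}^3}\Delta d\cdot(d\cdot\nabla)\Delta u\,dx$, show it arises with opposite signs from the momentum and director estimates, and then handle the potentially dangerous $(d^TAd)d$ contribution separately by differentiating the constraint $d\cdot\Delta d=-|\nabla d|^2$ to reduce $d\cdot\partial_k\Delta d$ to lower order. The paper instead rewrites the two coupling terms \emph{as a pair} via the identities
\[
\Delta d+|\nabla d|^2d=(d\times\Delta d)\times d,\qquad (d\cdot\nabla)u-(d^TAd)d=\bigl(d\times(d\cdot\nabla)u\bigr)\times d,
\]
so that, after one integration by parts on each side, the leading contributions become
\[
\int_{\mathbb{R}^3}\bigl[(d\times\Delta\partial_kd)\times d\bigr]\otimes d:\nabla\partial_k u\,dx\quad\text{and}\quad\int_{\mathbb{R}^3}\bigl[(d\times(d\cdot\nabla)\partial_ku)\times d\bigr]\cdot\partial_k\Delta d\,dx,
\]
and two applications of the triple-product identity $(a\times b)\cdot c=(b\times c)\cdot a$ show these are identical. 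This packages $\Delta d+|\nabla d|^2d$ and $(d\cdot\nabla)u-(d^TAd)d$ as tangential projections onto $d^\perp$, which makes the cancellation structural and eliminates the need for a separate argument for $(d^TAd)d$; on the other hand, your route is more elementary in that it avoids any vector algebra and uses only integration by parts plus the constraint. Both approaches leave exactly the same residual terms of type $|\nabla d|\,|\nabla^2d|\,|\nabla^2u|$ and $|\nabla d|\,|\nabla u|\,|\nabla^3d|$, which are absorbed identically.
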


\begin{proof}
Multiplying$(\ref{EL})_2$ by $\Delta u$, and integrating the resultant over $\mathbb{R}^3$, then it follows from integration by parts that
\begin{eqnarray*}
\int_{\mathbb{R}^3}(|\n^2 u|^2+\varepsilon|\n^3u|^2)dx&=&\int_{\mathbb{R}^3}
\text{div}\,[\n d\odot \n d+(\Delta d+|\n d|^2d)\otimes d]\cdot \Delta u dx \nonumber\\
&&+\int_{\mathbb{R}^3}\rho(\partial_tu+(u\cdot\n) u) \cdot \Delta udx.
\end{eqnarray*}
Applying the operator $\n$ to $(\ref{EL})_4$, multiplying the resultant by $\n\Delta d$, and integrating over $\mathbb R^3$, then it follows from integration by parts that
\begin{align*}\label{n3de}
\int_{\mathbb{R}^3}|\n^3 d|^2dx
 = &\int_{\mathbb{R}^3}\n[\partial_td+(u\cdot\n) d-(d\cdot\n) u\nonumber\\
 &-|\n d|^2d+(d^T A d)d]:\n\Delta d\,dx.
\end{align*}
Summing the previous two equalities up yields
\begin{eqnarray}
  &&\int_{\mathbb R^3}(|\nabla^2u|^2+\varepsilon|\nabla^3u|^2+|\nabla^3d|^2)dx\nonumber\\
  &=&\int_{\mathbb R^3}  [\text{div}\,(\nabla d\odot\nabla d)+\rho(\partial_t u+(u\cdot\nabla)u)]\cdot\Delta udx\nonumber\\
  &&+ \int_{\mathbb R^3}\nabla[\partial_t d+(u\cdot\nabla)d -|\nabla d|^2d ]:\nabla\Delta d\,dx\nonumber\\
  &&+\int_{\mathbb R^3} \text{div}\,[(\Delta d+|\nabla d|^2d)\otimes d]\cdot\Delta udx\nonumber\\
  &&+\int_{\mathbb R^3}\nabla[(d^TAd)d-(d\cdot\nabla)u]:\nabla\Delta d\, dx\nonumber\\
  &=:&I_1+I_2+I_3+I_4.\label{li2}
\end{eqnarray}

Noticing that $\rho\leq\bar\rho$ and $|\nabla d|^2\leq|\Delta d|$, we have
\begin{eqnarray*}
  I_1+I_2&\leq&\int_{\mathbb R^3}[2(|\nabla d||\nabla^2d|+\rho|\partial_t u|+\rho|u||\nabla u|)|\Delta u|+(|\nabla\partial_td|\\
  &&+|u||\nabla^2d|+|\nabla u||\nabla d|+2|\nabla d||\nabla^2d|+|\nabla d|^3)|\nabla\Delta d|]dx\\
  &\leq&C(\bar\rho)\int_{\mathbb R^3}(|u|+|\nabla d|)(|\nabla u|+|\nabla^2d|)(|\Delta u|+|\nabla\Delta d|)dx\\
  &&+C(\bar\rho)\int_{\mathbb R^3}(\sqrt\rho|\partial_tu|+|\nabla\partial_t d|)(|\Delta u|+|\nabla\Delta d|)dx.
  \end{eqnarray*}
Recalling $|d|=1$, one can easily verify that
\begin{eqnarray}
  &&\Delta d+|\nabla d|^2 d=\Delta d-(d\cdot\Delta d)d=(d\times\Delta d)\times d,\label{NL1}\\
  &&(d\cdot\nabla)u-(d^TAd)d=(d\cdot\nabla)u-((d\cdot\nabla)u\cdot d)d  =(d\times(d\cdot\nabla)u)\times d.\label{NL2}
\end{eqnarray}
Thanks to these two equalities, it follows from integration by parts that
\begin{align*}
  I_3+ I_4
  =&\int_{\mathbb R^3} \text{div}\,\{[(d\times\Delta d)\times d]\otimes d\}\cdot\Delta udx\\
  &-\int_{\mathbb R^3}\nabla[(d\times(d\cdot\nabla)u)\times d]:\nabla\Delta d\, dx\\
  =&\int_{\mathbb R^3}\partial_k\{[(d\times\Delta d)\times d]\otimes d\}:\nabla\partial_k udx \\
  &-\int_{\mathbb R^3}\partial_k[(d\times(d\cdot\nabla)u)\times d]\cdot \partial_k\Delta d\,dx\\
  =&\int_{\mathbb R^3} [(d\times\Delta\partial_kd)\times d]\otimes d:\nabla\partial_kudx\\
  &-\int_{\mathbb R^3} [(d\times(d\cdot\nabla)\partial_ku)\times d]\cdot \partial_k\Delta d \,dx+I_r,
  \end{align*}
  where $I_r$ is the remaining term, which can be estimated as
  $$
  I_r\leq 3\int_{\mathbb R^3}( |\nabla d||\nabla^2d||\nabla^2u|+ |\nabla d||\nabla u||\nabla\Delta d|)dx.
  $$

Making use of the identity $(a\times b)\cdot c=(b\times c)\cdot a$ twice yields
$$
(d\cdot\nabla)\partial_ku\cdot [(d\times\Delta\partial_kd)\times d]=[(d\times(d\cdot\nabla)\partial_ku)\times d]\cdot \partial_k\Delta d,
$$
and thus we can further deduce
  \begin{align*}
  I_3+I_4
  =&\int_{\mathbb R^3}\{(d\cdot\nabla)\partial_ku\cdot [(d\times\Delta\partial_kd)\times d]\\
  &-[(d\times(d\cdot\nabla)\partial_ku)\times d]\cdot \partial_k\Delta d\}dx+I_r
  = I_r.
\end{align*}

Substituting the estimates for $I_1+I_2$ and $I_3+I_4$ into (\ref{li2}), it follows from the H\"older, Sobolev and Young inequalities that
\begin{eqnarray*}
&&\int_{\mathbb R^3}(|\nabla^2u|^2+\varepsilon|\nabla^3u|^2+|\nabla^3d|^2)dx
\leq I_1+I_2+I_r\nonumber\\
&\leq&C(\bar\rho)\int_{\mathbb R^3}(|u|+|\nabla d|)(|\nabla u|+|\nabla^2d|)(|\Delta u|+|\nabla\Delta d|)dx\\
  &&+C(\bar\rho)\int_{\mathbb R^3}(\sqrt\rho|\partial_tu|+|\nabla\partial_t d|)(|\Delta u|+|\nabla\Delta d|)dx\\
&\leq&C(\bar\rho)(\|u\|_{L^6}+\|\nabla d\|_{L^6})(\|\nabla u\|_{L^3}+ \|\nabla^2d\|_{L^3})(\|\Delta u\|_{L^2}+\|\nabla\Delta d\|_{L^2})\\
  &&+C(\bar\rho)(\|\sqrt\rho\partial_tu\|_{L^2}+\|\nabla\partial_td\|_{L^2}) (\|\Delta u\|_{L^2}+\|\nabla\Delta d\|_{L^2})\\
  &\leq&C(\bar\rho)(\|\nabla u\|_{L^2}+\|\nabla^2 d\|_{L^2})^{\frac32} (\|\nabla^2 u\|_{L^2}+\|\nabla^3 d\|_{L^2})^{\frac32}\\
  &&+C(\bar\rho)(\|\sqrt\rho\partial_tu\|_{L^2}+\|\nabla\partial_td\|_{L^2}) (\|\Delta u\|_{L^2}+\|\nabla\Delta d\|_{L^2})\\
  &\leq&\frac12(\|\nabla^2 u\|_{L^2}^2+\|\nabla^3 d\|_{L^2}^2)+C(\bar\rho)(\|\nabla u\|_{L^2}^6 +\|\nabla^2 d\|_{L^2}^6)\\
  && +C(\bar\rho)(\|\sqrt\rho\partial_tu\|_{L^2}^2+\|\nabla\partial_td\|_{L^2}^2),
\end{eqnarray*}
which implies the conclusion. This completes the proof of Lemma \ref{LEM3}.
\end{proof}

\begin{lemma}[\textbf{Estimate  for time derivatives}]\label{LEM4}
Let $(\rho, u, d)$ be a strong solution to system $(\ref{EL})$, on
$\mathbb{R}^3\times(0,T)$, then it holds that
\begin{align*}
\frac{d}{dt}\int_{\mathbb{R}^3} &(\rho |\partial_tu|^2+|\n \partial_td|^2)dx+\int_{\mathbb{R}^3}(|\n \partial_tu|^2+\varepsilon|\Delta \partial_tu|^2+|\Delta \partial_td|^2)dx\nonumber\\
\leq& C(\|\n u\|^4_{L^2}+\|\n^2 d\|^4_{L^2})(\|\sqrt\rho\partial_tu\|^2_{L^2}+\|\n^2u\|_{L^2}^2+\|\n \partial_td\|^2_{L^2} ),
\end{align*}
for $t\in(0,T)$, where $C$ is a positive constant depending only on $\bar\rho$.
\end{lemma}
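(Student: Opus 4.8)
The plan is to derive the claimed differential inequality by differentiating both equations of (\ref{EL}) in time and pairing them with their natural multipliers. First I would apply $\partial_t$ to the momentum equation $(\ref{EL})_2$, multiply by $\partial_t u$ and integrate over $\mathbb R^3$: since $\text{div}\,\partial_t u=0$ the pressure term drops, the biharmonic term contributes $\varepsilon\|\Delta\partial_t u\|_{L^2}^2$ and the viscous term $\|\nabla\partial_t u\|_{L^2}^2$, while $\int_{\mathbb R^3}\rho\,\partial_t^2u\cdot\partial_t u\,dx$ combines with the $\partial_t\rho$-term into $\tfrac12\tfrac{d}{dt}\|\sqrt\rho\,\partial_tu\|_{L^2}^2$ up to a remainder. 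That remainder, rewritten through $\partial_t\rho=-\text{div}(\rho u)$ and integrated by parts, together with the convective terms $\rho(\partial_tu\cdot\nabla)u\cdot\partial_t u$ and $\rho(u\cdot\nabla)\partial_tu\cdot\partial_t u$, is bounded---keeping $\rho\le\bar\rho$, always writing $\rho|\partial_t u|^2=|\sqrt\rho\,\partial_t u|^2$, and using $\|u\|_{L^\infty}\le C\|\nabla u\|_{L^2}^{1/2}\|\nabla^2u\|_{L^2}^{1/2}$---by $\eta\|\nabla\partial_t u\|_{L^2}^2$ plus terms of the admissible type. What survives on the right is the stress term $\int_{\mathbb R^3}\partial_t\big[\nabla d\odot\nabla d+(\Delta d+|\nabla d|^2 d)\otimes d\big]:\nabla\partial_t u\,dx$.

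Next I would rewrite $(\ref{EL})_4$ as $\partial_t d-\Delta d=-(u\cdot\nabla)d+(d\cdot\nabla)u+|\nabla d|^2 d-(d^TAd)d$, apply $\partial_t$, multiply by $-\Delta\partial_t d$ and integrate; since $-\int_{\mathbb R^3}\partial_t^2d\cdot\Delta\partial_t d\,dx=\tfrac12\tfrac{d}{dt}\|\nabla\partial_t d\|_{L^2}^2$, this produces $\tfrac12\tfrac{d}{dt}\|\nabla\partial_t d\|_{L^2}^2+\|\Delta\partial_t d\|_{L^2}^2$ on the left and, on the right, the terms $\int_{\mathbb R^3}\partial_t\big[(u\cdot\nabla)d-(d\cdot\nabla)u-|\nabla d|^2 d+(d^TAd)d\big]\cdot\Delta\partial_t d\,dx$. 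Adding the two identities gives the left-hand side of the lemma (after multiplying by $2$); the decisive point now, which is the analogue of the cancellation used in Lemmas \ref{LEM1} and \ref{LEM3}, is that the only error term pairing a top-order factor in $d$ with a top-order factor in $u$ splits off and vanishes: $\partial_t\big[(\Delta d+|\nabla d|^2 d)\otimes d\big]$ contributes $\int_{\mathbb R^3}(\Delta\partial_t d\otimes d):\nabla\partial_t u\,dx=\int_{\mathbb R^3}\Delta\partial_t d\cdot(d\cdot\nabla)\partial_t u\,dx$ from the momentum side, whereas $-\partial_t\big[(d\cdot\nabla)u\big]$ contributes $-\int_{\mathbb R^3}(d\cdot\nabla)\partial_t u\cdot\Delta\partial_t d\,dx$ from the director side, and these cancel identically. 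The remaining $(d^TAd)d$-contractions are not top order: from $|d|=1$ one has $d\cdot\partial_t d=0$, $d\cdot\Delta d=-|\nabla d|^2$, and (differentiating the last in time) $d\cdot\Delta\partial_t d=-2\nabla d:\nabla\partial_t d-\partial_t d\cdot\Delta d$, so every factor $d\cdot\Delta\partial_t d$ that appears is of lower order.

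It then remains to estimate the error terms, each of which carries at most one top-order factor from $\{\nabla\partial_t u,\ \sqrt\varepsilon\,\Delta\partial_t u,\ \Delta\partial_t d\}$. Here I would use repeatedly: the pointwise consequences of $|d|=1$ (in particular $|\nabla d|^2\le|\Delta d|$, hence $\|\nabla d\|_{L^4}^2\le\|\Delta d\|_{L^2}$, and $|d\cdot\Delta\partial_t d|\le 2|\nabla d|\,|\nabla\partial_t d|+|\partial_t d|\,|\Delta d|$); the H\"older, Sobolev and Gagliardo--Nirenberg inequalities in $\mathbb R^3$, such as $\|\nabla\partial_t d\|_{L^3}\le C\|\nabla\partial_t d\|_{L^2}^{1/2}\|\Delta\partial_t d\|_{L^2}^{1/2}$ and $\|\partial_t d\|_{L^6}\le C\|\nabla\partial_t d\|_{L^2}$; Young's inequality to absorb small multiples of $\|\nabla\partial_t u\|_{L^2}^2$, $\varepsilon\|\Delta\partial_t u\|_{L^2}^2$ and $\|\Delta\partial_t d\|_{L^2}^2$ into the left side; and Lemma \ref{LEM3} to replace the elliptic quantities $\|\nabla^2 u\|_{L^2}$ and $\|\nabla^3 d\|_{L^2}$, wherever they appear, by $\|\sqrt\rho\,\partial_t u\|_{L^2}$, $\|\nabla\partial_t d\|_{L^2}$ and powers of $\|\nabla u\|_{L^2}$, $\|\nabla^2 d\|_{L^2}$. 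Collecting all the bounds gives the stated inequality.

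The hard part is the cancellation in the second step. If one instead worked directly with the original system (\ref{el}), or ran a standard linearization, then $\nabla\cdot\partial_t\big[(\Delta d+|\nabla d|^2 d)\otimes d\big]$ in the equation for $\partial_t u$ and $\partial_t\big[(d\cdot\nabla)u\big]$ in the equation for $\partial_t d$ are each too strong to be absorbed by the available dissipation alone; only the exact algebraic cancellation between them (when the first is tested against $\partial_t u$ and the second against $-\Delta\partial_t d$) makes the estimate close, and it survives precisely because the regularizing term $\varepsilon\Delta^2 u$ alters neither the stress tensor $\nabla d\odot\nabla d+(\Delta d+|\nabla d|^2 d)\otimes d$ nor the director equation. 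Everything else is bookkeeping with the Gagliardo--Nirenberg and Young inequalities.
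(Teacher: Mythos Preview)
Your approach is essentially the paper's: differentiate both equations in time, test the momentum equation against $\partial_t u$ and the director equation against $-\Delta\partial_t d$, exploit the exact cancellation $\int(\Delta\partial_t d\otimes d):\nabla\partial_t u\,dx-\int(d\cdot\nabla)\partial_t u\cdot\Delta\partial_t d\,dx=0$, and handle the $(d^TAd)d$-contribution via $d\cdot\Delta\partial_t d=-2\nabla d:\nabla\partial_t d-\partial_t d\cdot\Delta d$; the paper splits the resulting right-hand side into fifteen integrals $J_1,\dots,J_{15}$ (your cancellation is their $J_4+J_{12}=0$, and your $d\cdot\Delta\partial_t d$ identity is exactly how they treat $J_{15}$).

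One small discrepancy: the paper does \emph{not} invoke Lemma~\ref{LEM3} here. The stated inequality keeps $\|\nabla^2 u\|_{L^2}^2$ explicitly on the right (it arises only from the $J_1$-block, i.e.\ the $\rho u\cdot\nabla[(\partial_t u+(u\cdot\nabla)u)\cdot\partial_t u]$ terms), and the paper's bookkeeping is arranged so that $\|\nabla^3 d\|_{L^2}$ never appears at all. So your appeal to Lemma~\ref{LEM3} is unnecessary and would in fact alter the form of the conclusion; drop it and the proofs coincide.
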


\begin{proof}
Differentiating$(\ref{EL})_2$ with respect to $t$, we obtain
\begin{align*}
\rho(\partial_t^2u&+(u\cdot\n) \partial_tu)+\rho (\partial_tu\cdot\nabla) u+\partial_t\rho (\partial_tu+ (u\cdot\n) u)+\n P_t\\
=&\Delta \partial_tu-\varepsilon\Delta^2\partial_tu-\text{div}\,\partial_t[\n d\odot \n d+(\Delta d+|\n d|^2d)\otimes d].
\end{align*}
Multiplying the above equation by $\partial_tu$, integrating the resultant over $\mathbb R^3$, and using the continuity equation, i.e., $(\ref{EL})_1$, it follows from integration by parts that
\begin{eqnarray*}
&&\frac12\frac{d}{dt}\int_{\mathbb{R}^3}\rho |\partial_tu|^2dx+\int_{\mathbb{R}^3}(|\n \partial_tu|^2+\varepsilon|\Delta \partial_tu|^2)dx \nonumber\\
&=&\int_{\mathbb{R}^3}\big\{\mbox{div}(\rho u)(\partial_tu+(u\cdot \n) u)\cdot \partial_tu-\rho (\partial_tu\cdot\n) u\cdot \partial_tu\nonumber\\
&&\ \ \ \ +\partial_t [\n d\odot \n d+(\Delta d+|\n d|^2d)\otimes d] :\n \partial_tu\big\}dx.
\end{eqnarray*}
Differentiating $(\ref{EL})_4$ with respect to t, multiplying the resultant by $\Delta \partial_td$, and integrating over $\mathbb{R}^3$,
then it follows from integration by parts that
\begin{eqnarray*}
&&\frac12\frac{d}{dt}\int_{\mathbb{R}^3}|\n \partial_td|^2dx+\int_{\mathbb{R}^3}|\Delta \partial_td|^2dx\nonumber\\
&=&\int_{\mathbb{R}^3}\partial_t[(u\cdot\n) d-(d\cdot\n) u-|\n d|^2d+(d^T A d)d]\cdot \Delta \partial_td\,dx.
\end{eqnarray*}
Summing the above two equalities up it yields
\begin{eqnarray}\label{utdt}
&&\frac12\frac{d}{dt}\int_{\mathbb{R}^3} (\rho |\partial_tu|^2+|\n \partial_td|^2)dx+\int_{\mathbb{R}^3}(|\n \partial_tu|^2+\varepsilon|\Delta \partial_tu|^2+|\Delta \partial_td|^2dx\nonumber\\
&=&\int_{\mathbb{R}^3}\mbox{div}(\rho u)(\partial_tu+(u\cdot \n) u)\cdot \partial_tudx-\int_{\mathbb{R}^3}\rho (\partial_tu\cdot\n) u\cdot \partial_tudx\nonumber\\
&\ &+\int_{\mathbb{R}^3} (\n \partial_td\odot \n d+ \n d\odot\n \partial_td): \n \partial_tudx+\int_{\mathbb{R}^3}\Delta \partial_td\otimes d:\n \partial_tudx\nonumber\\
&\ &+2\int_{\mathbb{R}^3} (\n \partial_td :\n d)d\otimes d:\n \partial_tudx +\int_{\mathbb{R}^3} |\n d|^2\partial_td\otimes d:\n \partial_tudx\nonumber\\
&\ &+\int_{\mathbb{R}^3} \Delta d\otimes \partial_td:\n \partial_tudx +\int_{\mathbb{R}^3} |\n d|^2d\otimes \partial_td : \n \partial_tudx\nonumber\\
&\ &+\int_{\mathbb{R}^3} (\partial_tu\cdot \n) d\cdot \Delta \partial_tddx+\int_{\mathbb{R}^3} (u\cdot \n) \partial_td\cdot \Delta \partial_tddx\nonumber\\
&\ &-\int_{\mathbb{R}^3} (\partial_td\cdot \n) u\cdot \Delta \partial_tddx -\int_{\mathbb{R}^3} (d\cdot \n) \partial_tu \cdot \Delta \partial_tddx\nonumber\\
&\ &-\int_{\mathbb{R}^3}2(\n \partial_td : \n d)d\cdot \Delta \partial_tddx-\int_{\mathbb{R}^3} |\n d|^2\partial_td\cdot \Delta \partial_tddx\nonumber\\
&\ &+\int_{\mathbb{R}^3} ((d^T A d)d)_t\cdot \Delta \partial_tddx=:\sum^{15}_{i=1}J_i.
\end{eqnarray}

We are going to estimate the terms $J_i, i=1,2,\cdots, 15$. For $J_1$, it follows from integration by parts, and using the H\"older, Sobolev and Young inequalities that
\begin{align*}
J_1=&\int_{\mathbb{R}^3} \mbox{div}(\rho u)(\partial_tu+(u\cdot \n) u)\cdot \partial_tudx \\
=& -\int_{\mathbb{R}^3} \rho u\cdot\n[(\partial_tu+(u\cdot\n) u)\cdot \partial_tu]dx\\
\leq&\int_{\mathbb{R}^3} (2\rho|u||\n \partial_tu||\partial_tu|+\rho|u||\n u|^2|\partial_tu|\\
&+\rho|u|^2|\n^2 u||\partial_tu|+\rho|u|^2|\n u||\n \partial_tu|)dx\\
\leq&2\|\sqrt{\rho}u\|_{L^6}\|\sqrt{\rho}\partial_tu\|_{L^2}^{\frac12} \|\sqrt{\rho}\partial_tu\|_{L^6}^{\frac12} \|\n \partial_tu\|_{L^2}+\|\rho\|_{L^\infty}\|u\|_{L^6}\|\nabla u\|_{L^3}^2\|\partial_tu\|_{L^6} \\
&+\|\rho\|_{L^\infty}\|u\|_{L^6}^2\|\n^2 u\|_{L^2} \|\partial_tu\|_{L^6}+\|\rho\|_{L^\infty}\|u\|_{L^6}^2\|\n u\|_{L^6} \|\n \partial_tu|_{L^2} \\
\leq&C(\|\nabla u\|_{L^2}\|\sqrt\rho\partial_tu\|_{L^2}^{\frac12}\|\nabla\partial_tu \|_{L^2}^{\frac32}+\|\nabla u\|_{L^2}^2\|\nabla^2u\|_{L^2}\|\nabla\partial_tu\|_{L^2})\\
\leq&\eta\|\n \partial_tu\|^2_{L^2}+C_\eta\|\n u\|^4_{L^2}(\|\n^2 u\|^2_{L^2}+\|\sqrt\rho \partial_tu\|^2_{L^2}),
\end{align*}
for any positive $\eta\in(0,1)$, where $C_\eta$ is a positive constant depending only on $\eta$ and $\bar\rho$. For $J_2$ and $J_3+J_5$, by the H\"older, Sobolev and Young inequalities, we deduce
\begin{eqnarray*}
J_2&=&-\int_{\mathbb{R}^3} \rho (\partial_tu\cdot\n) u\cdot \partial_tudx \leq \|\n u\|_{L^2}\|\rho \partial_tu\|^{\frac12}_{L^2}\|\rho \partial_tu\|^{\frac12}_{L^6}\|\partial_tu\|_{L^6}\\
&\leq&C\|\n u\|_{L^2}\|\rho \partial_tu\|^{\frac12}_{L^2}\|\n \partial_tu\|^{\frac32}_{L^2} \leq \eta\|\n \partial_tu\|^2_{L^2}+C_\eta\|\n u\|^4_{L^2}\|\sqrt\rho \partial_tu\|^2_{L^2},
\end{eqnarray*}
and
\begin{eqnarray*}
J_3+J_5&=&\int_{\mathbb{R}^3} (\n \partial_td\odot\n d+\n d\odot \n \partial_td):\n \partial_tu \\
&&+2 \n \partial_td:\n d)d\otimes d:\n \partial_tudx \leq 4\int_{\mathbb{R}^3} |\n \partial_td||\n d||\n \partial_tu|dx\\
&\leq&4\|\nabla\partial_td\|_{L^3}\|\nabla d\|_{L^6}\|\nabla\partial_tu\|_{L^2}\\
&\leq& C\|\nabla\partial_td\|_{L^2}^{\frac12}\|\Delta\partial_td\|_{L^2}^{\frac12} \|\nabla^2d\|_{L^2}\|\nabla\partial_tu\|_{L^2}\\
&\leq&\eta(\|\n \partial_tu\|^2_{L^2}+\|\Delta\partial_t d\|_{L^2}^2)+C_\eta\|\nabla^2d\|_{L^2}^4\|\n \partial_td\|^2_{L^2},
\end{eqnarray*}
for any positive $\eta\in(0,1)$, where $C_\eta$ is a positive constant depending only on $\eta$ and $\bar\rho$.
For $J_4+J_{12}$, integration by parts yields
\begin{eqnarray}
J_4+J_{12}&=&\int_{\mathbb{R}^3} [(\Delta \partial_td\otimes d):\n \partial_tu-(d\cdot \n)\partial_tu\cdot\Delta \partial_td]dx\nonumber\\
&=&\int_{\mathbb{R}^3}  (\Delta \partial_t d^i d^j \partial_j\partial_tu^i - d^j \partial_j\partial_tu^i\Delta\partial_t d^i)dx = 0.\nonumber
\end{eqnarray}
By the H\"older, Sobolev and Young inequalities, we can estimate $J_6+J_8+J_{14}$ as follows
\begin{eqnarray*}
J_6+J_8+J_{14}&=&\int_{\mathbb{R}^3}|\n d|^2 [(\partial_td\otimes d+d\otimes\partial_td):\n \partial_tu-\partial_td\cdot\Delta\partial_td]dx\\
&\leq&2\|\n d\|^2_{L^6}\|\partial_td\|_{L^6}(\|\n \partial_tu\|_{L^2}+\|\Delta\partial_td\|_{L^2})\\
 &\leq& C \|\n^2d\|^2_{L^2}\|\n \partial_td\|_{L^2}(\|\n \partial_tu\|_{L^2}+\|\Delta\partial_td\|_{L^2})\\
&\leq&\eta(\|\n \partial_tu\|^2_{L^2}+\|\Delta\partial_td\|_{L^2}^2) +C_\eta\|\n^2d\|^4_{L^2}\|\n \partial_td\|^2_{L^2},
\end{eqnarray*}
for any positive $\eta\in(0,1)$, where $C_\eta$ is a positive constant depending only on $\eta$. Using the Gagliardo-Nirenberg inequality, $\|f\|_{L^\infty(\mathbb R^3)}\leq C\|f\|_{L^6(\mathbb R^3)}^{\frac12}\|\Delta f\|_{L^2(\mathbb R^3)}^{\frac12}$, it follows from the H\"older, Sobolev and Young inequalities that
\begin{eqnarray*}
J_7+J_{11}&=&\int_{\mathbb{R}^3} [\Delta d\otimes \partial_td:\n \partial_tu-(\partial_td\cdot\nabla)u\cdot\Delta\partial_td]dx\\
&\leq &(\|\Delta d\|_{L^2}+\|\nabla u\|_{L^2})\|\partial_td\|_{L^\infty}(\|\n \partial_tu\|_{L^2} +\|\Delta\partial_td\|_{L^2} )\\
&\leq&C(\|\Delta d\|_{L^2}+\|\nabla u\|_{L^2})\|\partial_td\|_{L^6}^{\frac12}\|\Delta\partial_t d\|_{L^2}^{\frac12}(\|\n \partial_tu\|_{L^2} +\|\Delta\partial_td\|_{L^2} ) \\
&\leq& C(\|\Delta d\|_{L^2}+\|\nabla u\|_{L^2})\|\nabla\partial_td\|_{L^2}^{\frac12}\|\Delta\partial_t d\|_{L^2}^{\frac12}(\|\n \partial_tu\|_{L^2} +\|\Delta\partial_td\|_{L^2}^2)\\
&\leq&\eta(\|\n \partial_tu\|^2_{L^2}+\|\Delta\partial_td\|_{L^2}^2) +C_\eta(\|\Delta d\|_{L^2}^4+\|\nabla u\|_{L^2}^4)\|\nabla\partial_td\|^2_{L^2},
\end{eqnarray*}
for any positive $\eta\in(0,1)$, where $C_\eta$ is a positive constant depending only on $\eta$. For $J_9$, we integrate by parts, and using the H\"older, Sobolev and Young inequality to deduce
\begin{eqnarray*}
J_9&=&\int_{\mathbb{R}^3}  (\partial_tu\cdot \n) d\cdot\Delta \partial_td\,dx
 = -\int_{\mathbb{R}^3}\nabla[(\partial_tu\cdot \n) d]:\nabla  \partial_td\,dx\\
&\leq&\int_{\mathbb{R}^3} (|\n \partial_tu||\n d||\n \partial_td|+|\partial_tu||\n^2 d||\n \partial_td|)dx\\
&\leq&\|\n \partial_tu\|_{L^2}\|\n d\|_{L^6}\|\n \partial_td\|_{L^3}+\|\partial_tu\|_{L^6}\|\n^2 d\|_{L^2}\|\n \partial_td\|_{L^3}\\
&\leq&C\|\n \partial_tu\|_{L^2}\|\n^2 d\|_{L^2}\|\n \partial_td\|^{\frac12}_{L^2}\|\Delta\partial_td\|^{\frac12}_{L^2}\\
&\ &+C\|\nabla\partial_tu\|_{L^2}\|\n^2 d\|_{L^2}\|\n \partial_td\|_{L^2}^{\frac12}\|\n^2 \partial_td\|^{\frac12}_{L^2}\\
&\leq&\eta(\|\n \partial_tu\|^2_{L^2}+\|\Delta \partial_td\|^2_{L^2})+C_\eta\|\n^2 d\|^4_{L^2}\|\n \partial_td\|^2_{L^2},
\end{eqnarray*}
for any positive $\eta\in(0,1)$, where $C_\eta$ is a positive constant depending only on $\eta$. For $J_{10}+J_{13}$, by the H\"older, Sobolev and Young inequalities, we deduce
\begin{eqnarray*}
J_{10}+J_{13}&=&\int_{\mathbb{R}^3}  [u\cdot \n \partial_td\cdot \Delta \partial_td-2(\n \partial_td:\n d)d\cdot \Delta \partial_td]dx\\
&\leq&2(\|\nabla d\|_{L^6}+\|u\|_{L^6})\|\nabla\partial_td\|_{L^3} \|\Delta\partial_td\|_{L^2}\\
&\leq&C(\|\nabla^2 d\|_{L^2}+\|\nabla u\|_{L^2})\|\nabla\partial_td\|_{L^2}^{\frac12} \|\Delta\partial_td\|_{L^2}^{\frac32}\\
&\leq&\eta\|\Delta\partial_td\|_{L^2}^2+C_\eta(\|\nabla^2d\|_{L^2}^4 +\|\nabla u\|_{L^2}^4)\|\nabla\partial_td\|_{L^2}^2,
\end{eqnarray*}
for any positive $\eta\in(0,1)$, where $C_\eta$ is a positive constant depending only on $\eta$. Using the constraint $|d|=1$, we have
\begin{align*}
  d\cdot\Delta\partial_td=\partial_t(d\cdot\Delta d)-\partial_td\cdot\Delta d=-\partial_t(|\nabla d|^2)-\partial_td\cdot\Delta d.
\end{align*}
By the aid of this identity, one obtains
\begin{eqnarray*}
  J_{15}&=&\int_{\mathbb R^3}\partial_t[(d^TAd)d]\cdot\Delta\partial_td\,dx\\
  &=&\int_{\mathbb R^3}[(d^T\partial_tAd)d\cdot\Delta\partial_td +\partial_t(d^id^jd^l)\partial_ju^i \Delta\partial_td^l]dx\\
  &=&\int_{\mathbb R^3}[\partial_t(d^id^jd^l)\partial_ju^i \Delta\partial_td^l-(d^T\partial_tAd)(\partial_t(|\nabla d|^2)+\partial_t d\cdot \Delta d)]dx\\
  &\leq&3\int_{\mathbb R^3}[|\partial_td||\nabla u||\Delta\partial_td| + |\nabla\partial_tu|(|\nabla d||\nabla\partial_td|+|\partial_td||\Delta d|)]dx.
\end{eqnarray*}
Hence, by the H\"older, Sobolev and Young inequality, as well as the Gagliardo-Nirenber inequality, $\|f\|_{L^\infty(\mathbb R^3)}\leq C\|f\|_{L^6(\mathbb R^2)}^{\frac12}\|\Delta f\|_{L^2(\mathbb R^3)}^{\frac12}$, we can estimate $J_{15}$ as
\begin{eqnarray*}
  J_{15}&\leq&3\|\partial_td\|_{L^\infty}(\|\nabla u\|_{L^2}+\|\Delta d\|_{L^2})(\|\Delta\partial_td\|_{L^2}+\|\nabla\partial_tu\|_{L^2})\\
  &&+3\|\nabla\partial_tu\|_{L^2}\|\nabla d\|_{L^6}\|\nabla\partial_t d\|_{L^3}\\
  &\leq&C\|\partial_td\|_{L^6}^{\frac12}\|\Delta\partial_td \|_{L^2}^{\frac12}(\|\nabla u\|_{L^2}+\|\Delta d\|_{L^2})(\|\Delta\partial_td\|_{L^2}+\|\nabla\partial_tu\|_{L^2})\\ &&+C\|\nabla\partial_tu\|_{L^2}\|\nabla^2 d\|_{L^2}\|\nabla\partial_t d\|_{L^2}^{\frac12}\|\Delta\partial_t d\|_{L^2}^{\frac12}\\
  &\leq&C\|\nabla\partial_td\|_{L^2}^{\frac12}(\|\nabla u\|_{L^2}+\|\Delta d\|_{L^2})(\|\Delta\partial_td\|_{L^2} +\|\nabla\partial_tu\|_{L^2})^{\frac32}\\ &&+C\|\nabla\partial_tu\|_{L^2}\|\nabla^2 d\|_{L^2} \|\nabla\partial_t d\|_{L^2}^{\frac12}\|\Delta\partial_t d\|_{L^2}^{\frac12}\\
  &\leq&\eta(\|\Delta\partial_td\|_{L^2}^2  +\|\nabla\partial_tu\|_{L^2}^2)+C_\eta(\|\nabla u\|_{L^2}^4+\|\nabla^2d\|_{L^2}^4)\|\nabla\partial_td\|_{L^2}^2,
\end{eqnarray*}
for any positive $\eta\in(0,1)$, where $C_\eta$ is a positive constant depending only on $\eta$.

Thanks to the above estimates for $J_i, i=1,2,\cdots,15$, choosing $\eta$ small enough, it follows from (\ref{utdt}) that
\begin{align*}
\frac{d}{dt}\int_{\mathbb{R}^3} &(\rho |\partial_tu|^2+|\n \partial_td|^2)dx+\int_{\mathbb{R}^3}(|\n \partial_tu|^2+\varepsilon|\Delta \partial_tu|^2+|\Delta \partial_td|^2dx\nonumber\\
\leq& C(\|\n u\|^4_{L^2}+\|\n^2 d\|^4_{L^2})(\|\sqrt\rho\partial_tu\|^2_{L^2}+\|\n^2u\|_{L^2}^2+\|\n \partial_td\|^2_{L^2} ),
\end{align*}
for a positive constant $C$ depending only on $\bar\rho$. This completes the proof of Lemma \ref{LEM4}.
\end{proof}

\begin{proposition}\label{LEM5}
Under the same conditions in Lemma \ref{ELLOC}, the unique strong solution $(\rho, u, d)$ established there can be extended to another time $T_*$, which depends only on $\bar\rho$, $\|g_0\|_{L^2(\mathbb R^3)}$, $\|\nabla u_0\|_{H^1}, \|\nabla d_0\|_{H^2}$ and $\varepsilon\|\Delta^2u_0\|_{L^2}$. Moreover, we have the following estimate
\begin{align*}
  &\sup_{0\leq t\leq T_*}(\|u\|_{D^{2,2}}^2+\|d-d_*\|_{D^{3,2}}^2+\|\nabla\partial_td\|_{L^2}^2+\|\nabla\rho\|_{L^3}^3+\|\partial_t\rho\|_{L^3}^3) \\
  &+\int_0^{ T_*}(\|\nabla\partial_tu\|_{L^2}^2+\|\nabla\Delta u\|_{L^2}^2+\|\Delta\partial_td\|_{L^2}^2+\|\Delta^2d\|_{L^2}^2)ds\leq C,
\end{align*}
where $C$ is a positive constant depending only on $\bar\rho$, $\|g_0\|_{L^2}$, $\|\nabla u_0\|_{H^1}, \|\nabla d_0\|_{H^2}$ and $\varepsilon\left\|\frac{\Delta^2 u_0}{\sqrt{\rho_0}}\right\|_{L^2}$.
\end{proposition}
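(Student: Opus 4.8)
The plan is to combine the differential inequalities of Lemmas~\ref{LEM1}--\ref{LEM4} into one closed estimate for
$$
\mathcal E(t):=\|\n u\|_{L^2}^2+\varepsilon\|\Delta u\|_{L^2}^2+\|\Delta d\|_{L^2}^2+\|\sqrt\rho\,\partial_tu\|_{L^2}^2+\|\n\partial_td\|_{L^2}^2,
$$
run a Gronwall argument on a short interval, bootstrap the remaining regularities from the equations, and close with a standard continuation argument upgrading the local existence time $T_\varepsilon$ of Lemma~\ref{ELLOC} to $T_*$. First I would check that $\mathcal E(0)$ is controlled by exactly the data in the statement: evaluating $(\ref{EL})_2$ at $t=0$ and using the definition of $g_0$ gives $\sqrt{\rho_0}\,\partial_tu|_{t=0}=g_0-\varepsilon\frac{\Delta^2u_0}{\sqrt{\rho_0}}-\sqrt{\rho_0}(u_0\cdot\n)u_0$ up to a pressure gradient controlled in the same norms, so that $\|\sqrt{\rho_0}\partial_tu|_{t=0}\|_{L^2}\le\|g_0\|_{L^2}+\varepsilon\|\Delta^2u_0/\sqrt{\rho_0}\|_{L^2}+C(\bar\rho)\|\n u_0\|_{H^1}^2$; reading $\n\partial_td|_{t=0}$ off $(\ref{EL})_4$ bounds $\|\n\partial_td|_{t=0}\|_{L^2}$ by $C(\|\n u_0\|_{H^1},\|\n d_0\|_{H^2})$; and $\varepsilon\|\Delta u_0\|_{L^2}^2\le\|\n u_0\|_{H^1}^2$ since $\varepsilon<1$.

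For the closed estimate, add the two inequalities of Lemma~\ref{LEM2} and a small multiple $\delta$ of Lemma~\ref{LEM4}, with $\eta$ small, and in every right-hand side replace $\|\n^2u\|_{L^2}^2$ and $\|\n^3d\|_{L^2}^2$ by the elliptic bound of Lemma~\ref{LEM3}; choosing $\eta,\delta$ small enough that the dissipation (roughly $\|\sqrt\rho\,\partial_tu\|$, $\|\n^3d\|$, $\delta\|\n\partial_tu\|$, $\delta\sqrt\varepsilon\|\Delta\partial_tu\|$, $\delta\|\Delta\partial_td\|$) absorbs the coupling, one arrives at $\frac{d}{dt}\mathcal E+c\,\mathcal D\le C(1+\mathcal E)^{p}$, where $\mathcal D$ collects the above dissipation and $c,C,p$ depend only on $\bar\rho$. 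Integrating gives $T_*=T_*(\bar\rho,\mathcal E(0))>0$ and $\sup_{[0,T_*]}\mathcal E+\int_0^{T_*}\mathcal D\le M(\bar\rho,\mathcal E(0))$; feeding this back into Lemma~\ref{LEM3} used pointwise yields the $L^\infty_t$ bounds on $\|u\|_{D^{2,2}}$, $\|d-d_*\|_{D^{3,2}}$ and $\|\n\partial_td\|_{L^2}$ (the low-order piece $\|\n d\|_{L^2}$ of the $D^{3,2}$-norm coming from $\frac{d}{dt}\|\n d\|_{L^2}^2\le 2\|\n d\|_{L^2}\|\n\partial_td\|_{L^2}$), and the $L^2_t$ bounds on $\|\n\partial_tu\|_{L^2}$ and $\|\Delta\partial_td\|_{L^2}$.

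It remains to get the $\varepsilon$-free space-time bounds on $\|\n\Delta u\|_{L^2}$ and $\|\Delta^2d\|_{L^2}$, and the $L^\infty_t$ bounds on $\|\n\rho\|_{L^3}$, $\|\partial_t\rho\|_{L^3}$. For the former I would run one more estimate, one derivative above Lemma~\ref{LEM3}: test $(\ref{EL})_2$ with $\Delta^2u$ and the Laplacian of $(\ref{EL})_4$ with $\Delta^2d$, sum, and observe that the top-order coupling cancels exactly --- the term pairing $\n^4d$ with $(d\cdot\n)\Delta u$ that comes from $\text{div}[(\Delta d+|\n d|^2d)\otimes d]$ in the momentum equation and the term pairing $\n^3u$ with $\Delta^2d$ that comes from $(d\cdot\n)u-(d^TAd)d$ in the director equation --- via the same algebra $(a\times b)\cdot c=(b\times c)\cdot a$ together with $\Delta d+|\n d|^2d=(d\times\Delta d)\times d$ and $(d\cdot\n)u-(d^TAd)d=(d\times(d\cdot\n)u)\times d$ used in the proof of Lemma~\ref{LEM3}. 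The residual terms are genuinely lower order and, after H\"older--Sobolev--Young, are dominated by $\|\n\partial_tu\|_{L^2}^2$ and $\|\Delta\partial_td\|_{L^2}^2$ (both in $L^1(0,T_*)$) times powers of already-bounded quantities, which closes $\int_0^{T_*}\big(\|\n^3u\|_{L^2}^2+\varepsilon\|\n^4u\|_{L^2}^2+\|\n^4d\|_{L^2}^2\big)\le C$. Then $\|\n u\|_{L^\infty}\le C\|\n^2u\|_{L^2}^{1/2}\|\n^3u\|_{L^2}^{1/2}\in L^1(0,T_*)$; applying $\n$ to $(\ref{EL})_1$, testing against $|\n\rho|\n\rho$ and using $\text{div}\,u=0$ gives $\frac{d}{dt}\|\n\rho\|_{L^3}^3\le 3\|\n u\|_{L^\infty}\|\n\rho\|_{L^3}^3$, so Gronwall bounds $\|\n\rho\|_{L^3}$; and $\partial_t\rho=-u\cdot\n\rho$ with $\|u\|_{L^\infty}\le C\|u\|_{L^6}^{1/2}\|\n^2u\|_{L^2}^{1/2}$ bounds $\|\partial_t\rho\|_{L^3}$. (Since the transport step feeds on the higher estimate, these two steps are run together, shrinking $T_*$ slightly if necessary.) The extension of the solution from $(0,T_\varepsilon)$ to $(0,T_*)$ is then routine: the a priori bounds keep finite, on $[0,T_*]$, all the norms that enter the local theory of Lemma~\ref{ELLOC}, so its maximal existence time cannot precede $T_*$.

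The main obstacle is the higher-order estimate. Taking two derivatives rather than the single one of Lemma~\ref{LEM3} produces a long list of commutator remainders that must each be verified to be of strictly lower order, and one has to be sure that the top-order contributions --- in particular the $\n^4d$ term generated by $(d\cdot\n)u$ in the director equation, whose coefficient is exactly $1$ because $|d|=1$ and so cannot be absorbed by any small constant --- are removed by the algebraic cancellation, not merely estimated.
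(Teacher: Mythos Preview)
Your proposal is correct and follows essentially the same route as the paper: package Lemmas~\ref{LEM2}--\ref{LEM4} into a closed differential inequality for the energy $\mathcal E$ (the paper's $f_\varepsilon$), control $\mathcal E(0)$ from the equations at $t=0$ exactly as you describe, run a one-derivative-higher analogue of Lemma~\ref{LEM3} with the same cross-product cancellation to get $\|\nabla^3u\|_{L^2}^2+\|\nabla^4d\|_{L^2}^2$, and close the transport estimate for $\|\nabla\rho\|_{L^3}$ coupled to it. The only place where the paper is more explicit than your sketch is the last coupling: the residual in the higher-order estimate carries a factor $\|\nabla\rho\|_{L^3}^2\|\nabla\partial_tu\|_{L^2}^2$, which the paper substitutes into the transport inequality to obtain a Bernoulli-type ODE $\frac{d}{dt}(1+\|\nabla\rho\|_{L^3}^3)\le C(1+\|\nabla\partial_tu\|_{L^2}^2+\|\Delta\partial_td\|_{L^2}^2)^{1/4}(1+\|\nabla\rho\|_{L^3}^3)^2$ and solves it on a possibly shorter interval---precisely what your parenthetical ``run together, shrinking $T_*$ if necessary'' alludes to.
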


\begin{proof}
Extend the strong solution $(\rho, u, d)$ in Lemma \ref{ELLOC} to the maximal existence time $\tilde T$. Define a function $f_\varepsilon$ on $[0,\tilde T)$ as
\begin{align*}
  f_\varepsilon(t)=&(\|\nabla u\|_{L^2}^2+\varepsilon\|\Delta u\|_{L^2}^2+\| \sqrt\rho\partial_tu\|_{L^2}^2+\|\Delta d\|_{L^2}^2+ \|\nabla\partial_td\|_{L^2}^2)(t)\\
  &+\frac12\int_0^t(\|\sqrt\rho\partial_tu\|_{L^2}^2+\|\nabla\partial_tu\|_{L^2}^2+\varepsilon \| \Delta\partial_tu\|_{L^2}^2+\|\nabla\Delta d\|_{L^2}^2+\|\Delta\partial_td\|_{L^2}^2)ds.
\end{align*}
Then, by Lemma \ref{LEM2} and Lemma \ref{LEM4}, we obtain
$$
f_\varepsilon'(t)\leq C(f_\varepsilon^3(t)+f_\varepsilon(t)+\|\nabla^2u\|_{L^2}^2(t)) +Cf_\varepsilon^2(t) (f_\varepsilon(t)+\|\nabla^2u\|_{L^2}^2(t)),
$$
for any $t\in(0,\tilde T)$, and for a positive constant $C$ depending only on $\bar\rho$.
By Lemma \ref{LEM3}, one has $\|\nabla^2u\|_{L^2}^2(t)\leq C(f_\varepsilon(t)+f_\varepsilon^3(t))$, for any $t\in(0,T_\varepsilon)$, and for a positive constant $C$ depending only on $\bar\rho$.
Therefore, by the Young inequality, we have
\begin{eqnarray*}
(1+f_\varepsilon)'(t)&\leq& C(f_\varepsilon^3(t)+f_\varepsilon(t)) +Cf_\varepsilon^2(t)(f_\varepsilon(t)+f_\varepsilon^3(t))\nonumber\\
&\leq& C(1+f_\varepsilon^5(t))\leq C_*(1+f_\varepsilon(t))^5,
\end{eqnarray*}
for $t\in(0,\tilde T)$, where $C$ and $C_*$ are positive constants depending only on $\bar\rho$.
Set $h_\varepsilon(t)=f_\varepsilon(t)+1$. Then, the above ordinary differential inequality implies that
\begin{equation}\label{NNL1}
h_\varepsilon(t)\leq(h_\varepsilon^{-4}(0)-4C_*t)^{-\frac14}\leq 2^{\frac14}h_\varepsilon(0), \quad\forall t\in[0, t_*'),
\end{equation}
where $t_*'=\min\left\{\frac{1}{8C_*h_\varepsilon^4(0)},\tilde T\right\}$.

We are going to estimate $h_\varepsilon(0)$. Using equation $(\ref{EL})_4$, and by the H\"older and Sobolev inequalities, one can easily verify that $\|\nabla\partial_td(0)\|_{L^2}\leq C$, for a positive constant $C$ depending only on $\|\nabla u_0\|_{H^1}+\|\nabla d_0\|_{H^2}$. Hence, we have
$$
h_\varepsilon(0)\leq C+\|\sqrt\rho\partial_tu\|_{L^2}^2(0),
$$
for a positive constant $C$ depending only on $\|\nabla u_0\|_{H^1}+\|\nabla d_0\|_{H^2}$.
We still need estimate $\|\sqrt\rho\partial_tu\|_{L^2}^2(0)$. To this end, we
multiply equation $(\ref{EL})_2$ by $\partial_tu$, and integrate over $\mathbb R^3$, then it follows from integration by parts that
\begin{align*}
  &\|\sqrt\rho\partial_tu\|_{L^2}^2(t)\\
  =&\int_{\mathbb R^3}[\Delta u-\nabla P-\text{div}(\nabla d\odot\nabla d+(\Delta d+|\nabla d|^2d)\otimes d)-\varepsilon\Delta^2u-\rho(u\cdot\nabla)u]\cdot\partial_tu dx \\
  =&\int_{\mathbb R^3}[\Delta u-\nabla P_0-\text{div}(\nabla d\odot\nabla d+(\Delta d+|\nabla d|^2d)\otimes d)-\varepsilon\Delta^2u -\rho(u\cdot\nabla)u]\cdot\partial_tu dx \\
  \leq&\|\sqrt\rho\partial_tu\|_{L^2}\left\|\frac{1}{\sqrt\rho}[\Delta u-\nabla P_0-\text{div}(\nabla d\odot\nabla d+(\Delta d+|\nabla d|^2d)\otimes d)]\right\|_{L^2}\\
  &+\varepsilon\|\sqrt\rho\partial_tu\|_{L^2}\left\|\frac{\Delta^2u}{\sqrt\rho} \right\|_{L^2}+\|\sqrt\rho\partial_tu\|_{L^2}\|\sqrt\rho (u\cdot\nabla)u\|_{L^2}.
\end{align*}
Thus
\begin{align*}
  \|\sqrt\rho\partial_tu\|_{L^2}(t)\leq&\left\|\frac{1}{\sqrt\rho}[\Delta u-\nabla P_0-\text{div}(\nabla d\odot\nabla d+(\Delta d+|\nabla d|^2d)\otimes d)]\right\|_{L^2}\\
  &+\varepsilon \left\|\frac{\Delta^2u}{\sqrt\rho} \right\|_{L^2}+ \|\sqrt\rho (u\cdot\nabla)u\|_{L^2},
\end{align*}
from which, by taking $t\rightarrow0^+$, and using the H\"older and Sobolev inequalities, one obtains
$$
 \|\sqrt\rho\partial_tu\|_{L^2}(0)\leq \|g_0\|_{L^2}+\varepsilon \left\|\frac{\Delta^2u_0}{\sqrt{\rho_0}} \right\|_{L^2}+ C,
$$
for a positive constant $C$ depending only on $\|\nabla u_0\|_{H^1}$ and $\bar\rho$.

Thanks to the above estimate, it follows from (\ref{NNL1}) that
\begin{align*}
(\|\nabla u\|_{L^2}^2& +\| \sqrt\rho\partial_tu\|_{L^2}^2+\|\Delta d\|_{L^2}^2+ \|\nabla\partial_td\|_{L^2}^2)(t)\\
  &+ \int_0^t(\|\nabla\partial_tu\|_{L^2}^2 +\|\nabla\Delta d\|_{L^2}^2+\|\Delta\partial_td\|_{L^2}^2)ds\leq C,
\end{align*}
for any $t\in(0,\min\{t_*, \tilde T\})$, where $t_*$ and $C$ are positive constants depending only on $\bar\rho$, $\|\nabla u_0\|_{H^1}+\|\nabla d_0\|_{H^2}$, $\|g_0\|_{L^2}$ and $\varepsilon \left\|\frac{\Delta^2u_0}{\sqrt{\rho_0}} \right\|_{L^2}$. By Lemma \ref{LEM1} and Lemma \ref{LEM3}, it follows from the above estimate that
\begin{align}
(\|\nabla u\|_{H^1}^2&+\|\nabla d\|_{H^2}^2+ \|\nabla\partial_td\|_{L^2}^2)(t)  \nonumber\\
  &+\int_0^{t}(\|\nabla\partial_tu\|_{L^2}^2 +\|\nabla\Delta d\|_{L^2}^2+\|\Delta\partial_td\|_{L^2}^2)ds\leq C,\label{NNL2}
\end{align}
for any $t\in(0,\min\{t_*, \tilde T\})$, and $C$ is a positive constant depending only on $\bar\rho$, $\|\nabla u_0\|_{H^1}+\|\nabla d_0\|_{H^2}$, $\|g_0\|_{L^2}$ and $\varepsilon \left\|\frac{\Delta^2u_0}{\sqrt{\rho_0}} \right\|_{L^2}$.

We now work on the estimates of $\|\nabla\Delta u\|_{L^2}^2$ and $\|\Delta^2d\|_{L^2}^2$. Applying the operator $\n$ to $(\ref{EL})_1$, multiplying the resultant by $\n\Delta u$ and integrating over $\mathbb{R}^3$ yields
\begin{eqnarray*}
 \int_{\mathbb{R}^3} (|\n\Delta u|^2+\varepsilon|\Delta^2u|^2)dx
&=&\int_{\mathbb{R}^3}\nabla[\rho(\partial_tu+(u\cdot\n) u)+\text{div}(\nabla d\odot\nabla d)]:\n\Delta udx\\
&& +\int_{\mathbb{R}^3}\n\text{div}[(\Delta d+|\n d|^2)\otimes d]:\n\Delta udx.
\end{eqnarray*}
Applying the operator $\Delta$ to $(\ref{EL})_4$, multiplying the resultant by $\Delta^2 d$ and integrating over $\mathbb{R}^3$ yields
\begin{eqnarray*}
\int_{\mathbb{R}^3}|\Delta^2d|^2dx&=&\int_{\mathbb{R}^3}\Delta[\partial_td+(u\cdot\nabla)d-|\nabla d|^2 d]\cdot\Delta^2d dx \\
&&-\int_{\mathbb R^3}\Delta[(d\cdot\nabla)u-(d^TAd)d]\cdot\Delta^2 ddx.
\end{eqnarray*}
Summing the previous two equalities up, recalling that $|\nabla d|^2\leq|\Delta d|$, after some simple calculations, one obtains
\begin{align}
  &\int_{\mathbb R^3}(|\nabla\Delta u|^2+\varepsilon|\Delta^2u|^2+|\Delta^2d|^2)dx\nonumber \\
  \leq&\int_{\mathbb{R}^3}\{\n\text{div}[(\Delta d+|\n d|^2)\otimes d]:\n\Delta u-\Delta[(d\cdot\nabla)u-(d^TAd)d]\cdot\Delta^2 d\}dx
  \nonumber\\
  &+\int_{\mathbb R^3}|\nabla\rho|(|\partial_tu|+|u||\nabla u|)|\nabla\Delta u|dx +C\int_{\mathbb R^3}[|\nabla\partial_tu|+|\Delta\partial_td|+(|u|+|\nabla d|)\nonumber\\
  &\times(|\nabla^2u|+|\nabla^3d|)+|\nabla u|^2+|\nabla^2d|^2](|\nabla\Delta u|+|\Delta^2d|)dx,\label{NL3}
\end{align}
for a positive constant $C$ depending only $\bar\rho$.

Recalling the equalities (\ref{NL1})--(\ref{NL2}), one has
\begin{eqnarray*}
  &&\int_{\mathbb{R}^3}\big\{\n\text{div}[(\Delta d+|\n d|^2)\otimes d]:\n\Delta u- \Delta[(d\cdot\nabla)u-(d^TAd)d]\cdot\Delta^2 d \big\}dx\\
  &=&\int_{\mathbb R^3}\big\{\Delta[((d\times\Delta d)\times d)\otimes d]:\nabla\Delta u-\Delta[(d\times(d\cdot\nabla)u)\times d]\cdot\Delta^2 d\big\}dx\\
  &=&\int_{\mathbb R^3}\big\{[((d\times\Delta^2 d)\times d)\otimes d]:\nabla\Delta u-[(d\times(d\cdot\nabla)\Delta u)\times d]\cdot\Delta^2 d\big\}dx+K_r,
\end{eqnarray*}
where $K_r$ is an integral, which, by straightforward calculations, and using
$|\nabla d|^2\leq|\Delta d|$ and the Young inequality, can be bounded by
$$
  |K_r|\leq 100\int_{\mathbb R^3}[|\nabla d|(|\nabla^2u|+|\nabla^3d|)+|\nabla u|^2+|\nabla^2d|^2](|\nabla\Delta u|+|\Delta^2d|)dx.
$$
Applying the identity $(a\times b)\cdot c=(b\times c)\cdot a$ twice yields
$$
[((d\times\Delta^2 d)\times d)\otimes d]:\nabla\Delta u=[d\times((d\cdot\nabla)\Delta u)\times d]\cdot\Delta^2 d,
$$
and thus we have the following estimate
\begin{eqnarray*}
  &&\int_{\mathbb{R}^3}\big\{\n\text{div}[(\Delta d+|\n d|^2)\otimes d]:\n\Delta u- \Delta[(d\cdot\nabla)u-(d^TAd)d]\cdot\Delta^2 d \big\}dx\\
  &=&K_r\leq 100\int_{\mathbb R^3}[|\nabla d|(|\nabla^2u|+|\nabla^3d|)+|\nabla u|^2+|\nabla^2d|^2](|\nabla\Delta u|+|\Delta^2d|)dx.
\end{eqnarray*}

Thanks to the above estimate, it follows from (\ref{NL3}) that
\begin{eqnarray*}
  &&\int_{\mathbb R^3}(|\nabla\Delta u|^2+\varepsilon|\Delta^2u|^2+|\Delta^2d|^2)dx\nonumber \\
  &\leq&\int_{\mathbb R^3}|\nabla\rho|(|\partial_tu|+|u||\nabla u|)|\nabla\Delta u|dx+C\int_{\mathbb R^3}[|\nabla\partial_tu|+|\Delta\partial_td|+(|u|+|\nabla d|) \nonumber\\
  &&\times(|\nabla^2u|+|\nabla^3d|) +|\nabla u|^2+|\nabla^2d|^2](|\nabla\Delta u|+|\Delta^2d|)dx,
\end{eqnarray*}
for a positive constant $C$ depending only $\bar\rho$. Using the H\"older and Sobolev inequalities, as well as the Gagliardo-Nirenberg inequality, $\|f\|_{L^\infty(\mathbb R^3)}\leq C\|f\|_{L^6(\mathbb R^3)}^{\frac12}\|\Delta f\|_{L^2(\mathbb R^3)}^{\frac12}$, it follows from the above inequality and (\ref{NNL2}) that
\begin{eqnarray*}
  &&\int_{\mathbb R^3}(|\nabla\Delta u|^2+\varepsilon|\Delta^2u|^2+|\Delta^2d|^2)dx\nonumber \\
  &\leq&\|\nabla\rho\|_{L^3}(\|\partial_tu\|_{L^6}+\|u\|_{L^\infty}\|\nabla u\|_{L^6})\|\nabla\Delta u\|_{L^2}\\
  &&+C[\|\nabla\partial_tu\|_{L^2}+\|\Delta\partial_td\|_{L^2}+(\|u\|_{L^6}+\|\nabla d\|_{L^6})(\|\nabla^2u\|_{L^3}+\|\nabla^3d\|_{L^3})\\
  &&+\|\nabla u\|_{L^4}^2+\|\nabla^2d\|_{L^4}^2](\|\nabla\Delta u\|_{L^2}+\|\Delta^2d\|_{L^2})\\
  &\leq&C\|\nabla\rho\|_{L^3}(\|\nabla\partial_tu\|_{L^2}+\|\nabla u\|_{L^2}^{\frac12}\|\nabla^2u\|_{L^2}^{\frac12}\|\nabla^2u\|_{L^2})\|\nabla\Delta u\|_{L^2}+C[\|\nabla\partial_tu\|_{L^2}\\
  &&+\|\Delta\partial_td\|_{L^2}+(\|\nabla u\|_{L^2}+\|\nabla^2d\|_{L^2}) (\|\nabla^2u\|_{L^2}^{\frac12}+\|\nabla^3d\|_{L^2}^{\frac12} )(\|\nabla\Delta u\|_{L^2}^{\frac12} \\
  && +\|\Delta^2d\|_{L^2}^{\frac12})+ \|\nabla u\|_{L^2}^{\frac12}\|\nabla^2u\|_{L^2}^{\frac32}+\|\nabla^2d \|_{L^2}^{\frac12}\|\nabla\Delta d\|_{L^2}^{\frac32}](\|\nabla\Delta u\|_{L^2} +\|\Delta^2d\|_{L^2})\\
  &\leq&C\|\nabla\rho\|_{L^3}(\|\nabla\partial_tu\|_{L^2}+1)\|\nabla\Delta u\|_{L^2}+C(\|\nabla\partial_tu\|_{L^2}+\|\Delta\partial_td\|_{L^2}\\
  &&+\|\nabla\Delta u\|_{L^2}^{\frac12}+\|\Delta^2d\|_{L^2}^{\frac12})(\|\nabla\Delta u\|_{L^2}+\|\Delta^2d\|_{L^2})\\
  &\leq&\frac12(\|\nabla\Delta u\|_{L^2}^2+\|\Delta^2d\|_{L^2}^2)+C\|\nabla\rho\|_{L^3}^2\|\nabla\partial_t u\|_{L^2}^2\\
  &&+C(\|\nabla\partial_tu\|_{L^2}^2+\|\Delta\partial_td\|_{L^2}^2 +1),
\end{eqnarray*}
for a positive constant $C$ depending only on $\bar\rho$, $\|\nabla u_0\|_{H^1}+\|\nabla d_0\|_{H^2}$, $\|g_0\|_{L^2}$ and $\varepsilon \left\|\frac{\Delta^2u_0}{\sqrt{\rho_0}} \right\|_{L^2}$.
Therefore, we have
\begin{align}
  \label{NL4}
  &\|\nabla\Delta u\|_{L^2}^2(t)+\varepsilon\|\Delta^2u\|_{L^2}^2(t)+\|\Delta^2d\|_{L^2}^2(t) \nonumber\\
  \leq& C\|\nabla\rho\|_{L^3}^2(t)\|\nabla\partial_t u\|_{L^2}^2(t)+C(\|\nabla\partial_tu\|_{L^2}^2(t) +\|\Delta\partial_td\|_{L^2}^2(t) +1),
\end{align}
for any $t\in(0,\min\{t_*,\tilde T\})$, where $C$ is a positive constant depending only on $\bar\rho$, $\|\nabla u_0\|_{H^1}+\|\nabla d_0\|_{H^2}$, $\|g_0\|_{L^2}$ and $\varepsilon \left\|\frac{\Delta^2u_0}{\sqrt{\rho_0}} \right\|_{L^2}$.

Applying the operator $\nabla$ to  equation $(\ref{EL})_1$, multiplying the resulting equation by $3|\n\rho|\n\rho$ and integrating over $\mathbb R^3$, it follows from integrating by parts, the Gagliardo-Nirenberg inequality, $\|f\|_{L^\infty(\mathbb R^3)}\leq C\|f\|_{L^6(\mathbb R^3)}^{\frac12}\|\Delta f\|_{L^2(\mathbb R^3)}^{\frac12}$, and the Young inequality
that
\begin{eqnarray*}
\frac{d}{dt}\|\n\rho\|^3_{L^3} &\leq& 3\|\n u\|_{L^{\infty}}\|\n\rho\|^3_{L^3}
\leq C\|\nabla u\|_{L^6}^{\frac12}\|\nabla\Delta u\|_{L^2}^{\frac12}\|\nabla\rho \|_{L^3}^3\\
&\leq&C\|\n ^2u\|^{\frac12}_{L^2}\|\n^3u\|^{\frac12}_{L^2}\|\n\rho\|^3_{L^3}.
\end{eqnarray*}
Thanks to (\ref{NL4}), it follows from the above inequality and the Young inequality that
$$
\frac{d}{dt}(1+\|\nabla \rho\|_{L^3}^3)\leq C(1+\|\nabla\partial_tu\|_{L^2}^2 +\|\Delta\partial_td\|_{L^2}^2)^{\frac14}(1+\|\nabla\rho\|_{L^3}^3)^2.
$$
Solving this ordinary differential inequality, and using the H\"older inequalty yields
\begin{eqnarray*}
  -\frac{1}{1+\|\nabla \rho\|_{L^3}^3}&\leq&C\int_0^t(1+\|\nabla\partial_tu\|_{L^2}^2 +\|\Delta\partial_td\|_{L^2}^2)^{\frac14}ds -\frac{1}{1+\|\nabla\rho_0\|_{L^3}^3} \\
  &\leq&C t^{\frac34} \left[\int_0^t(1+\|\nabla\partial_tu\|_{L^2}^2 +\|\Delta\partial_td\|_{L^2}^2)ds\right]^{\frac14} -\frac{1}{1+\|\nabla\rho_0\|_{L^3}^3}\\
  &\leq&C_{**}t^{\frac34}-\frac{1}{1+\|\nabla\rho_0\|_{L^3}^3}\leq -\frac{1}{2(1+\|\nabla\rho_0\|_{L^3}^3)},
\end{eqnarray*}
for any $t\leq t_{**}:=\min\left\{ \left(\frac{1}{2C_{**}(1+\|\nabla\rho_0\|_{L^3}^3)}\right)^{\frac43},t_*, \tilde T\right\}$, where $C_{**}$ is a positive constant depending only on $\bar\rho$, $\|\nabla u_0\|_{H^1}+\|\nabla d_0\|_{H^2}$, $\|g_0\|_{L^2}$ and $\varepsilon \left\|\frac{\Delta^2u_0}{\sqrt{\rho_0}} \right\|_{L^2}$. The above inequality implies
\begin{equation}\label{eqeq1}
\|\nabla\rho\|_{L^3}^3\leq 1+2\|\nabla\rho_0\|_{L^3}^3,
\end{equation}
for any $t\leq t_{**}$. Thanks to this, by equation $(\ref{EL})_1$, and using the Gagliardo-Nirenberg and Sobolev
inequalities, we deduce
\begin{eqnarray*}
\|\partial_t\rho\|_{L^3}&\leq&\|u\|_{L^\infty}\|\nabla\rho\|_{L^3}\leq C\|u\|_{L^6}^{\frac12}\|\Delta u\|_{L^2}^{\frac12}\|\nabla\rho\|_{L^3}\\
&\leq&C\|\nabla u\|_{L^2}^{\frac12}\|\Delta u\|_{L^2}^{\frac12}\|\nabla\rho\|_{L^3}.
\end{eqnarray*}
Combining this with (\ref{NNL2}) and (\ref{eqeq1}), one obtains the desired a priori estimate stated in Proposition \ref{LEM5}, on the time interval $(0,t_{**})$. While this a priori estimate in turn, by the local existence, i.e.\,Lemma \ref{ELLOC}, implies that $t_{**}\leq\tilde T$, otherwise, one can extend $(\rho, u, d)$ beyond the time $\tilde T$, which contradicts to the definition of $\tilde T$.  This proves the conclusion.
\end{proof}

\section{Proof of Theorem $\ref{th1}$}
\label{sec3}
Now we can prove the local existence and uniqueness of strong solutions to (\ref{el}).
\begin{proof}[Proof of Theorem \ref{th1}]
\textbf{Existence.} Let $j_\varepsilon$ be a standard modifier, that is $j_\varepsilon(x)=\frac{1}{\varepsilon^3}j\left(\frac{x}{\varepsilon}\right)$, with $0\leq j\in C_0^\infty(\mathbb R^3)$ and $\int_{\mathbb R^3}j(x)dx=1$. For any $\varepsilon>0$, we set
$$
u_{0\varepsilon}=j_{\varepsilon^{1/4}}*u_0,\quad d_{0\varepsilon}=d_0,\quad \rho_{0\varepsilon}=\rho_0+\varepsilon+\delta_\varepsilon,\quad \mbox{with } \delta_\varepsilon=\|u_{0\varepsilon}-u_0\|_{D^{2,2}},
$$
where $j_{\varepsilon^{1/4}}*u_0$ denotes the convolution of $j_{\varepsilon^{1/4}}$ with $u_0$, and set
$$
g_{0\varepsilon}=\frac{1}{\sqrt{\rho_{0\varepsilon}}}[\Delta u_{0\varepsilon}+\nabla P_0-\text{div}(\nabla d_{0\varepsilon}\odot\nabla d_{0\varepsilon}+(\Delta d_{0\varepsilon}+|\nabla d_{0\varepsilon}|^2d_{0\varepsilon})\otimes d_{0\varepsilon}].
$$
By the properties of the convolution, one has $u_{0\varepsilon}\rightarrow u_0$ in $D^{2,2}(\mathbb R^3)$, that is $\delta_\varepsilon\rightarrow0$. Noticing that
$g_{0\varepsilon}=\frac{1}{\sqrt{\rho_{0\varepsilon}}} (\sqrt{\rho_0}g_0+\Delta(u_{0\varepsilon} -u_0)),$ $\rho_0\leq\rho_{0\varepsilon}$ and $\rho_{0\varepsilon}\geq \delta_{0\varepsilon}$,
we have
$$
\|g_{0\varepsilon}\|_{L^2}\leq \|g_0\|_{L^2}+\sqrt{\delta_\varepsilon} \leq \|g_0\|_{L^2}+1,
$$
for $\varepsilon$ sufficiently small. Simple calculations yield
$$
\Delta^2u_{0\varepsilon}=\frac{1}{\sqrt\varepsilon}\int_{\mathbb R^3}\frac{1}{\varepsilon^{3/4}}\Delta j\left(\frac{x-y}{\varepsilon^{1/4}}\right)\Delta u_0(y)dy=\frac{1}{\sqrt\varepsilon} (\Delta j)_{\varepsilon^{1/4} }*\Delta u_0,
$$
where $(\Delta j)_{\varepsilon^{1/4}}(x)=\frac{1}{\varepsilon^{3/4}}\Delta j\left(\frac{x}{\varepsilon^{1/4}}\right)$, hence, noticing that $\Delta j\in C_0^\infty(\mathbb R^3)$ and $\sqrt{\rho_{0\varepsilon}}\geq\sqrt\varepsilon$, by the properties of convolution, we have
$$
\varepsilon\left\|\frac{\Delta^2u_{0\varepsilon}}{\sqrt{\rho_{0\varepsilon}}}
\right\|_{L^2}\leq\sqrt\varepsilon\|\Delta^2u_{0\varepsilon}\|_{L^2}
=\|(\Delta j)_{\varepsilon^{1/4}}*\Delta u_0\|_{L^2}\leq C\|\Delta u_0\|_{L^2},
$$
for a positive constant $C$ independent of $\varepsilon$. Therefore, we have
\begin{equation*}
\|\nabla u_{0\varepsilon}\|_{H^1}+\|\nabla d_{0\varepsilon}\|_{H^2}+\|\nabla\rho_{0\varepsilon} \|_{L^3}+ \varepsilon\left\|\frac{\Delta^2u_{0\varepsilon}}{\sqrt{\rho_{0\varepsilon}}}
\right\|_{L^2}+\|g_{0\varepsilon}\|_{L^2}\leq C,
\end{equation*}
for a positive constant $C$ depending only on $\|\nabla u_0\|_{H^1}+\|\nabla d_0\|_{H^2}+\|\nabla\rho_0\|_{L^3}+\|g_0\|_{L^2}$.

For any $\varepsilon>0$, by Lemma \ref{ELLOC} and Proposition \ref{LEM5}, there is a positive time $T^*>0$, depending only on $\bar\rho$ and $\|\nabla u_0\|_{H^1}+\|\nabla d_0\|_{H^2}+\|\nabla\rho_0\|_{L^3}+\|g_0\|_{L^2}$, such that system (\ref{EL}) has a unique strong solution $(\rho_\varepsilon, u_\varepsilon,d_\varepsilon)$, on $\mathbb R^3\times(0,T_*)$, with initial data $(\rho_{0\varepsilon}, u_{0\varepsilon}, d_{0\varepsilon})$, satisfying the estimate
\begin{align*}
  &\sup_{0\leq t\leq T_*}(\| \nabla u_\varepsilon\|_{H^1}^2+\|\nabla d_\varepsilon\|_{H^2}^2+\|\nabla\partial_td_\varepsilon\|_{L^2}^2 +\|\nabla\rho_\varepsilon \|_{L^3}^3+\|\partial_t\rho_\varepsilon\|_{L^3}^3) \\
  &+\int_0^{ T_*}(\|\nabla\partial_tu_\varepsilon\|_{L^2}^2+\|\nabla\Delta u_\varepsilon\|_{L^2}^2+\|\Delta\partial_td_\varepsilon\|_{L^2}^2+\|\Delta^2d_\varepsilon \|_{L^2}^2ds)\leq C,
\end{align*}
for a positive constant $C$ depending only on $\bar\rho$ and $\|\nabla u_0\|_{H^1}+\|\nabla d_0\|_{H^2}+\|\nabla\rho_0\|_{L^3}+\|g_0\|_{L^2}$, and in particular independent of $\varepsilon$. Thanks to this a priori estimate, it is then standard to show the existence of strong solution $(\rho, u, d)$ to system (\ref{el}), subject to (\ref{bc})--(\ref{ic}), by studying the limit $\varepsilon\rightarrow0^+$, and using the Aubin-Lions lemma and Cantor's diagonal argument.

\textbf{Uniqueness.}
We first note that, by the regularities of the strong solution $(\rho, u, d)$, one has $\nabla u\in L^1(0,T_*; W^{1,\infty})$, therefore, by the assumption that $\rho_0\in L^{\frac32}$ and $\nabla\rho_0\in L^2$, it follows from the transport equation $(\ref{el})_1$ that $\rho$
has the following additional regularities:
$$
\rho\in L^\infty(0,T_*; L^{\frac32}),\quad\nabla\rho\in L^\infty(0,T_*; L^2).
$$

Let $(\rho_i,u_i,d_i), i=1,2$ be two strong solutions to system (\ref{el}), on $\mathbb{R}^3\times(0,T_*)$, with $(\rho_i, u_i, d_i)|_{t=0}=(\rho_0, u_0, d_0)$. Set $\tilde{u}=u_1-u_2, \tilde{d}=d_1-d_2$ and $\tilde{\rho}=\rho_1-\rho_2$.
For convenience, we denote
$$
\mathscr S_i=(\Delta d_i+|\nabla d_i|^2d_i)\otimes d_i,\quad \mathscr Q_i=(d_i\cdot\nabla)u_i-(d_i^TA_id_i)d_i,\quad i=1,2.
$$
Recalling the equalities (\ref{NL1}) and (\ref{NL2}), it is clear that
$$
\mathscr S_i=[(d_i\times\Delta d_i)\times d_i]\otimes d_i,\quad\mathscr Q_i=[d_i\times(d_i\cdot\nabla)u_i]\times d_i.
$$
Then, straightforward calculations yield
\begin{eqnarray}
  \mathscr S_1-\mathscr S_2=[(d_1\times\Delta \tilde d)\times d_1]\otimes d_1+\mathscr R_s,\quad\mbox{with }|\mathscr R_s|\leq 3|\Delta d_1||\tilde d|,\label{NNNL1}\\
  \mathscr Q_1-\mathscr Q_2=[d_1\times(d_1\cdot\nabla)\tilde u]\times d_1+\mathscr R_q, \quad\mbox{with }|\mathscr R_q|\leq3|\nabla u_1||\tilde d|. \label{NNNL2}
\end{eqnarray}
Subtracting the equations for $(\rho_1, u_1, d_1)$ from those for $(\rho_2, u_2, d_2)$, one obtains the following system for $(\tilde\rho, \tilde u,\tilde d)$:
\begin{equation}\label{elun}\left\{
\begin{array}{l}
\partial_t\tilde{\rho}+u_1\cdot\n\tilde{\rho}+\tilde u\cdot\n\rho_2=0,\\
\rho_1(\partial_t\tilde{u}+(u_1\cdot\n)\tilde{u}) +\tilde{\rho}(\partial_tu_{2}+(u_2\cdot\n) u_2)+\rho_1(\tilde{u}\cdot\n) u_2+\n\tilde {P}\\
\qquad\ =\Delta\tilde{u}-\text{div}(\n \tilde{d}\odot\n d_1+\n d_2\odot\n\tilde{d})-\text{div}(\mathscr S_1-\mathscr S_2),\\
\text{div}\tilde{u}=0,\\
\partial_t\tilde{d}+(\tilde{u}\cdot \n) d_1+(u_2\cdot \n)\tilde{d}\\
\qquad\ =\Delta \tilde{d}+\n\tilde{d}:(\n d_1+\n d_2)d_1+|\n d_2|^2\tilde{d}+\mathscr Q_1-\mathscr Q_2.
\end{array}
\right.
\end{equation}

Multiplying $(\ref{elun})_2$ by $\tilde{u} $ and $(\ref{elun})_4$ by $-\Delta\bar{ d}$, respectively, summing the resultants up and integrating over $\mathbb{R}^3$, it follows from integration by parts that
\begin{align}\label{unn1}
&\frac12\frac {d}{dt}\int_{\mathbb{R}^3}(\rho_1|\tilde{u}|^2+|\n \tilde{d}|^2)dx+\int_{\mathbb{R}^3}(|\n \tilde{u}|^2+|\Delta\tilde{d}|^2)dx\nonumber\\
=&\int_{\mathbb{R}^3}\big\{-[\tilde{\rho}(\partial_tu_{2}+u_2\cdot\n u_2)+\rho_1\tilde{u}\cdot \n u_2]\cdot\tilde{u}+(\n\tilde{d}\odot\n d_1+\n d_2\odot \n \tilde{d}):\n\tilde{u}\big\}dx\nonumber\\
&+\int_{\mathbb{R}^3}\big\{(u_2\cdot\n)\tilde{d}\cdot\Delta\tilde{d}-((\partial_i\tilde{u}\cdot\n) d_1+(\tilde u\cdot\nabla)\partial_id_1)\cdot\partial_i\tilde d-[\n\tilde{d}:\n(d_1+d_2)d_1\nonumber\\
&+|\n d_2|^2\tilde{d}]\cdot \Delta\tilde{d}\big\}dx+\int_{\mathbb R^3}[(\mathscr S_1-\mathscr S_2):\nabla\tilde u-(\mathscr Q_1-\mathscr Q_2)\cdot\Delta\tilde d]dx.
\end{align}
Thanks to (\ref{NNNL1}) and (\ref{NNNL2}), and using the identity $(a\times b)\cdot c=(b\times c)\cdot a$ twice,
one obtains
\begin{eqnarray*}
  &&\int_{\mathbb R^3}[(\mathscr S_1-\mathscr S_2):\nabla\tilde u-(\mathscr Q_1-\mathscr Q_2)\cdot\Delta\tilde d]dx\\
  &=&\int_{\mathbb R^3}\big\{[(d_1\times\Delta \tilde d)\times d_1]\otimes d_1:\nabla\tilde u  -[d_1\times(d_1\cdot\nabla)\tilde u]\times d_1\cdot\Delta\tilde d\big\}dx\\
  &&+\int_{\mathbb R^3}(\mathscr R_s :\nabla\tilde u-\mathscr R_q\cdot\Delta\tilde d)dx\\
  &=&\int_{\mathbb R^3}\big\{(d_1\cdot\nabla)\tilde u \cdot [(d_1\times\Delta \tilde d)\times d_1]  -[d_1\times(d_1\cdot\nabla)\tilde u]\times d_1\cdot\Delta\tilde d\big\}dx\\
  &&+\int_{\mathbb R^3}(\mathscr R_s :\nabla\tilde u-\mathscr R_q\cdot\Delta\tilde d)dx=\int_{\mathbb R^3}(\mathscr R_s :\nabla\tilde u-\mathscr R_q\cdot\Delta\tilde d)dx\\
  &\leq&3\int_{\mathbb R^3}(|\Delta d_1||\tilde d||\nabla\tilde u|+|\nabla u_1||\tilde d||\Delta\tilde d|)dx.
\end{eqnarray*}
Substituting the above inequality into (\ref{unn1}), and noticing that the regularities of $(\rho_i, u_i, d_i)$ imply
$$
\sup_{0\leq t\leq T_*}(\|(u_i, \nabla d_i)\|_{L^6}+\|(\nabla u_i, \nabla^2d_i)\|_{L^3}+\|\nabla d_i\|_{L^\infty})\leq C,
$$
it follows from the H\"older, Sobolev and Young inequalities that
\begin{eqnarray*}
&&\frac12\frac {d}{dt}\int_{\mathbb{R}^3}(\rho_1|\tilde{u}|^2+|\n \tilde{d}|^2dx)+\int_{\mathbb{R}^3}(|\n \tilde{u}|^2+|\Delta\tilde{d}|^2)dx\nonumber\\
&\leq&\int_{\mathbb R^3}\big\{[|\tilde\rho|(|\partial_tu_2|+|u_2||\nabla u_2|)+\rho_1|\tilde u||\nabla u_2|]|\tilde u| +(|\nabla d_1|+|\nabla d_2|)|\nabla\tilde d||\nabla\tilde u|\nonumber\\
&&+|u_2||\nabla\tilde d||\Delta\tilde d|+(|\nabla\tilde u||\nabla d_1|+|\tilde u||\nabla^2d_1|)|\nabla\tilde d|
+[|\nabla\tilde d|(|\nabla d_1|+|\nabla d_2|)\nonumber\\
&&+|\nabla d_2|^2|\tilde d|] |\Delta\tilde d|\big\}dx+3\int_{\mathbb R^3}(|\Delta d_1||\tilde d||\nabla\tilde u|+|\nabla u_1||\tilde d||\Delta\tilde d|)dx\nonumber\\
&\leq&\|\tilde{\rho}\|_{L^{\frac32}}[(\|\partial_tu_{2}\|_{L^6}+\|u_2\|_{L^6}\|\n u_2\|_{L^\infty})\|\tilde{u}\|_{L^6}+\|\n u_2\|_{L^{\infty}}\|\sqrt{\rho_1}\tilde{u}\|^2_{L^2}]\nonumber\\
&&+(\|\n d_1\|_{L^{\infty}}+\|\n d_2\|_{L^{\infty}})\|\n \tilde{d}\|_{L^2}\|\n\tilde{u}\|_{L^2}+\|u_2\|_{L^6}\|\nabla\tilde d\|_{L^3}\|\Delta\tilde d\|_{L^2}\nonumber\\
&&+\|\nabla\tilde u\|_{L^2}\|\nabla d_1\|_{L^\infty}\|\nabla\tilde d\|_{L^2}+\|\tilde{u}\|_{L^6}\|\n^2 d_1\|_{L^3}\|\n\tilde{d}\|_{L^2}\nonumber\\
&&+\|\n\tilde{d}\|_{L^2}(\|\n d_1\|_{L^{\infty}}+\|\n d_2\|_{L^{\infty}})\|\Delta\tilde{d}\|_{L^2}+\|\n d_2\|^2_{L^6}\|\tilde{d}\|_{L^6}\|\Delta\tilde{d}\|_{L^2}\nonumber\\
&&+3\|\Delta d_1\|_{L^3}\|\tilde{d}\|_{L^6} |\n\tilde{u}\|_{L^2}+3\|\n u_1\|_{L^3}\|\tilde{d}\|_{L^6}\|\Delta\tilde{d}\|_{L^2}\nonumber\\
&\leq&C\Big\{\|\tilde\rho\|_{L^{\frac32}}[(\|\nabla\partial_tu_2\|_{L^2}+\|\nabla u_2\|_{H^2})\|\nabla\tilde u\|_{L^2}+\|\nabla u_2\|_{H^2} \|\sqrt{\rho_1}\tilde u\|_{L^2}]\nonumber\\
&&+\|\nabla\tilde d\|_{L^2}\|\nabla\tilde u\|_{L^2}+\|\nabla\tilde d\|_{L^2}^{\frac12}\|\Delta\tilde d\|_{L^2}^{\frac32}+\|\nabla\tilde d\|_{L^2}\|\Delta\tilde d\|_{L^2}\Big\}\\
&\leq&C(\|\nabla\partial_t u_2\|_{L^2}^2+\|\nabla u_2\|_{H^2}^2+1)(\|\sqrt\rho_1\tilde{u}\|^2_{L^2}+\|\tilde{\rho}\|^2_{L^{\frac32}}+\|\n\tilde{d}\|^2_{L^2})\nonumber\\
&&+\frac12(\|\n\tilde{u}\|^2_{L^2}+\|\Delta\tilde{d}\|^2_{L^2}).
\end{eqnarray*}
Hence
\begin{eqnarray}
  &&\frac{d}{dt}(\|\sqrt\rho_1\tilde{u}\|^2_{L^2}+\|\n\tilde{d}\|^2_{L^2})+ (\|\n\tilde{u}\|^2_{L^2}+\|\Delta\tilde{d}\|^2_{L^2})\nonumber\\
  &\leq&C(\|\nabla\partial_t u_2\|_{L^2}^2+\|\nabla u_2\|_{H^2}^2+1)(\|\sqrt\rho_1\tilde{u}\|^2_{L^2}+\|\tilde{\rho}\|^2_{L^{\frac32}}+\|\n\tilde{d}\|^2_{L^2}). \label{NNNL3}
\end{eqnarray}

Multiplying $(\ref{elun})_1$ by $\frac32|\tilde{\rho}|^{-\frac12}\tilde \rho$, it follows from integration by parts, the H\"older and Sobolev inequalities that
\begin{eqnarray*}
\frac{d}{dt} \|\tilde{\rho}\|_{L^{\frac32}}^{\frac32}
 &\leq& \frac32\int_{\mathbb{R}^3}|\tilde{\rho}|^{\frac12}|\tilde{u}||\n\rho_2|dx \leq \frac32\|\tilde{\rho}\|^{\frac12}_{L^{\frac32}}\|\n \rho_2\|_{L^2}\|\tilde{u}\|_{L^6}\nonumber\\
&\leq&C\|\tilde{\rho}\|^{\frac12}_{L^{\frac32}}\|\n \rho_2\|_{L^2}\|\n\tilde{u}\|_{L^2}.
\end{eqnarray*}
Multiplying the above inequality by $\|\tilde{\rho}\|^{\frac12}_{L^{\frac32}}$, and using Young's inequality, we obtain
\begin{eqnarray}\label{urholast}
&&\frac{d}{dt}\|\tilde{\rho}\|^2_{L^{\frac32}}\leq C\|\tilde{\rho}\|_{L^{\frac{3}{2}}}\|\n\tilde{u}\|_{L^2}\leq\frac12\|\n\tilde{u}\|^2_{L^2}+C\|\nabla\rho_2\|_{L^2}^2 \|\tilde{\rho}\|^2_{L^{\frac32}}.
\end{eqnarray}

Summing (\ref{NNNL3}) with (\ref{urholast}), one obtains
\begin{align*}
&\frac {d}{dt}(\|\sqrt\rho_1\tilde{u}\|^2_{L^2}+\|\n\tilde{d}\|^2_{L^2}+\|\tilde{\rho}\|^2_{L^{\frac32}})+ \frac12(\|\n\tilde{u}\|^2_{L^2}+\|\Delta\tilde{d}\|^2_{L^2})\\
 \leq &C(\|\nabla\partial_t u_2\|_{L^2}^2+\|\nabla u_2\|_{H^2}^2+\|\nabla\rho_2\|_{L^2}^2+1)(\|\sqrt\rho_1\tilde{u}\|^2_{L^2}+\|\n\tilde{d}\|^2_{L^2}+\|\tilde{\rho}\|^2_{L^{\frac32}}),
\end{align*}
from which, recalling that $(\tilde\rho, \tilde u, \tilde d)|_{t=0}=(0,0,0)$, by the Gronwall inequality, one has
$(\tilde{\rho},\tilde{u},\n\tilde{d})\equiv0.$ This, by the Sobolev embedding inequality, further implies $(\tilde\rho, \tilde u,\tilde d)\equiv0$. The proof is complete.
\end{proof}

\section*{Acknowledgments}
The authors are grateful to Prof. Xiangao Liu and Dr. Jinrui Huang for their helpful comments. Huajun Gong is partial supported by Posted-doctor Foundation of China 2015M580728 and Shenzhen University 2014-62 and NSFC No.61401283. Chen Xu is partial supported by Guangdong Provincial Science and Technology Plan Project No.2013B040403005, No.GCZX-A1409 and NSFC No.61472257.

\par
%\section*{References}

\end{document}